\DeclareRobustCommand{\em}{%
	\@nomath\em \if b\expandafter\@car\f@series\@nil
	\normalfont \else \slshape \fi}
\newcommand{\cA}{\mathcal{A}}
\newcommand{\Ao}{\cA_{\text{\normalfont \bfseries !}}}
\newcommand{\cB}{\cat{B}}
\newcommand{\cC}{\cat{C}}
\newcommand{\cE}{\cat{E}}
\newcommand{\cM}{\cat{M}}
\newcommand{\cN}{\cat{N}}
\newcommand{\N}{\mathsf{N}}
\newcommand{\close}{\catf{cl}}
\newcommand{\sS}{\mathsf{S}}
\newcommand{\HOM}{\underline{\catf{Hom}}} \newcommand{\END}{\underline{\catf{\End}}}
\tikzstyle{tikzfig}=[baseline=-0.25em,scale=0.5]
\tikzstyle{none}=[inner sep=0mm]
\newcommand{\tikzfig}[1]{%
	{\tikzstyle{every picture}=[tikzfig]
		\IfFileExists{#1.tikz}
		{\input{#1.tikz}}
		{%
			\IfFileExists{./figures/#1.tikz}
			{\input{./figures/#1.tikz}}
			{\tikz[baseline=-0.5em]{\node[draw=red,font=\color{red},fill=red!10!white] {\textit{#1}};}}%
	}}%
}
\tikzstyle{every loop}=[]
\tikzstyle{black dot}=[fill=black, draw=black, shape=circle, minimum size=3pt, inner sep=0pt]
\tikzstyle{black dot small}=[fill=black, draw=black, shape=circle, minimum size=2pt, inner sep=0pt]
\tikzstyle{fblack dot}=[fill=black, draw=red, shape=circle, minimum size=2pt, inner sep=0pt]
\tikzstyle{wbox}=[fill=white, draw=black, shape=rectangle, minimum height=0.5cm, minimum width=0.01cm]
\tikzstyle{bbox}=[fill=white, draw=blue, shape=rectangle, minimum height=0.5cm, minimum width=0.01cm]
\tikzstyle{rbox}=[fill=white, draw=red, shape=rectangle, minimum height=0.5cm, minimum width=0.01cm]
\tikzstyle{bwbox}=[draw=blue, shape=rectangle, minimum width=2cm, minimum height=0.5cm]
\tikzstyle{bbwbox}=[draw=blue, shape=rectangle, minimum width=1cm, minimum height=1cm]
\tikzstyle{big white circle}=[fill=white, draw=black, shape=circle, minimum width=0.75cm]
\tikzstyle{white dot big}=[fill=white, draw=black, shape=circle, inner sep=1pt]
\tikzstyle{white dot}=[fill=white, draw=black, shape=circle, minimum size=3pt, inner sep=0pt]
\tikzstyle{flat box}=[fill=white, draw=black, shape=rectangle, minimum width=1.3cm, minimum height=0.5cm,fill=morphismcolor]
\tikzstyle{square}=[fill=white, draw=black, shape=rectangle]
\tikzstyle{flat box 2}=[fill=white, draw=black, shape=rectangle, minimum height=0.5cm, minimum width=0.01cm,fill=morphismcolor]
\tikzstyle{bigbox}=[fill=white, draw=black, shape=rectangle, minimum height=0.5cm, minimum width=0.8cm,fill=white]
\tikzstyle{over }=[front]
\tikzstyle{theta}=[fill=blue, draw=blue, shape=ellipse, minimum height=6pt, minimum width=6pt, inner sep=0pt]
\tikzstyle{thetabig}=[fill=blue, draw=blue, shape=ellipse, minimum width=1cm, minimum height=0.01cm]
\tikzstyle{thetainv}=[fill=blue, draw=red, shape=ellipse, minimum height=6pt, minimum width=6pt, inner sep=0pt]
\tikzstyle{thetabinv}=[fill=blue, draw=red, shape=ellipse, minimum width=1cm, minimum height=0.01cm]
\tikzstyle{bigdisk}=[draw=black, shape=circle, minimum width=3cm]
\tikzstyle{wdisk}=[shape=circle, minimum width=0.48cm,fill=white]
\tikzstyle{bigdisk2}=[draw=black, fill=lightgray, shape=circle, minimum width=3cm]
\tikzstyle{little disk}=[fill=white, draw=black, shape=circle, minimum width=0.5cm]
\tikzstyle{mid arrow}=[-, postaction={on each segment={mid arrow}}]
\tikzstyle{end arrow}=[->]
\tikzstyle{mover}=[-, link]
\tikzstyle{string}=[-, draw=blue,postaction={on each segment={mid arrow}}]
\tikzstyle{stringd}=[-, dotted,draw=blue,postaction={on each segment={mid arrow}}]
\tikzstyle{red}=[-, dotted,draw=red]
\tikzstyle{mydots}=[-,dotted,dashed,draw=gray]
\tikzstyle{mydotsblack}=[-,dotted,dashed,draw=black]
\tikzstyle{open}=[-, line width=2pt,draw=blue]
\tikzstyle{thick}=[-,line width=1pt]
\tikzstyle{rarrow}=[->,draw=red]
\tikzstyle{red mid arrow}=[-, draw={rgb,255: red,214; green,42; blue,51}, postaction={on each segment={mid arrow}}, line width=1pt]
\tikzstyle{RED}=[-, draw={rgb,255: red,214; green,42; blue,51}]
\tikzstyle{REDdashed}=[-,dashed, draw={rgb,255: red,214; green,42; blue,51}]
\tikzstyle{REDarrow}=[->, draw={rgb,255: red,214; green,42; blue,51}]
\tikzstyle{darrow}=[->,dotted]
\tikzstyle{blue}=[-, draw=blue]
\tikzstyle{blue mid arrow}=[-, draw={rgb,255: red,23; green,37; blue,167}, postaction={on each segment={mid arrow}}, line width=1pt]
\tikzstyle{over}=[-, link]
\tikzstyle{bover}=[-, blink]
\tikzstyle{mover}=[-, link]
\tikzstyle{mapsto}=[{|->}]
\tikzset{
	on each segment/.style={
		decorate,
		decoration={
			show path construction,
			moveto code={},
			lineto code={
				\path [#1]
				(\tikzinputsegmentfirst) -- (\tikzinputsegmentlast);
			},
			curveto code={
				\path [#1] (\tikzinputsegmentfirst)
				.. controls
				(\tikzinputsegmentsupporta) and (\tikzinputsegmentsupportb)
				..
				(\tikzinputsegmentlast);
			},
			closepath code={
				\path [#1]
				(\tikzinputsegmentfirst) -- (\tikzinputsegmentlast);
			},
		},
	},
	mid arrow/.style={postaction={decorate,decoration={
				markings,
				mark=at position .7 with {\arrow[#1]{stealth}}
	}}},
}
\tikzset{%
	link/.style    = { white, double = black, line width = 1.8pt,
		double distance = 0.4pt },
	channel/.style = { white, double = black, line width = 0.8pt,
		double distance = 0.8pt },
}
\tikzset{%
	blink/.style    = { white, double = blue, line width = 2pt,
		double distance = 1pt },
	channel/.style = { white, double = blue, line width = 2pt,
		double distance = 1pt },
}
\tikzstyle{tikzfig}=[baseline=-0.25em,scale=0.5]
\tikzstyle{none}=[inner sep=0mm]
\tikzstyle{every loop}=[]
\newtheoremstyle{mytheorem}
{\topsep}
{\topsep}
{\slshape}
{0pt}
{\bfseries}
{.}
{ }
{\thmname{#1}\thmnumber{ #2}\thmnote{ {\normalfont\slshape(#3)}}}
\newtheoremstyle{mydefinition}
{\topsep}
{\topsep}
{\normalfont}
{0pt}
{\bfseries}
{.}
{ }
{\thmname{#1}\thmnumber{ #2}\thmnote{ {\normalfont\slshape(#3)}}}
\theoremstyle{mytheorem}
\newtheorem{theorem}{Theorem}[section]
\newtheorem*{rep@theorem}{\rep@title}
\newcommand{\newreptheorem}[2]{%
	\newenvironment{rep#1}[1]{%
		\def\rep@title{#2 \ref{##1}}%
		\begin{rep@theorem}}%
		{\end{rep@theorem}}}
\newtheorem{lemma}[theorem]{Lemma}
\newtheorem{proposition}[theorem]{Proposition}
\newtheorem{corollary}[theorem]{Corollary}
\theoremstyle{mydefinition}
\newtheorem{definition}[theorem]{Definition}
\newenvironment{example}
{\pushQED{\qed}\exx}
{\popQED\endexx}
\newenvironment{remark}
{\pushQED{\qed}\remm}
{\popQED\endremm}
\numberwithin{equation}{section}
\newenvironment{pnum}{\begin{enumerate}[topsep=2pt,parsep=2pt,partopsep=2pt,itemsep=0pt,label={(\roman{*})}]}{\end{enumerate}}
\DeclareMathSymbol{\Phiit}{\mathalpha}{letters}{"08}\let\Phi\undefined\newcommand{\Phi}{\Phiit}
\DeclareMathSymbol{\Psiit}{\mathalpha}{letters}{"09}\let\Psi\undefined\newcommand{\Psi}{\Psiit}
\DeclareMathSymbol{\Sigmait}{\mathalpha}{letters}{"06}\let\Sigma\undefined\newcommand{\Sigma}{\Sigmait}
\DeclareMathSymbol{\Xiit}{\mathalpha}{letters}{"04}
\DeclareSymbolFont{extraup}{U}{zavm}{m}{n}
\DeclareMathSymbol{\vardiamond}{\mathalpha}{extraup}{87}
\DeclareMathSymbol{\Lambdait}{\mathalpha}{letters}{"03}\let\Lambda\undefined\newcommand{\Lambda}{\Lambdait}
\DeclareMathSymbol{\Piit}{\mathalpha}{letters}{"05}\let\Pi\undefined\newcommand{\Pi}{\Piit}
\DeclareMathSymbol{\Gammait}{\mathalpha}{letters}{"00}\let\Gamma\undefined\newcommand{\Gamma}{\Gammait}
\DeclareMathSymbol{\Omegait}{\mathalpha}{letters}{"0A}\let\Omega\undefined\newcommand{\Omega}{\Omegait}
\DeclareMathSymbol{\Upsilonit}{\mathalpha}{letters}{"07}\let\Upsilon\undefined\newcommand{\Upsilon}{\Upilonit}
\DeclareMathSymbol{\Thetait}{\mathalpha}{letters}{"02}\let\Theta\undefined\newcommand{\Theta}{\Thetait}
\def\Hom{\catf{Hom}}
\let\O\undefined\newcommand{\O}{\catf{O}}
\def\End{\catf{End}}
\def\id{\mathrm{id}}
\let\to\undefined\newcommand{\to}{\longrightarrow}
\let\mapsto\undefined\newcommand{\mapsto}{\longmapsto}
\newcommand{\catf}[1]{\mathsf{#1}}
\newcommand{\Proj}{\operatorname{\catf{Proj}}}
\newcommand{\Map}{\catf{Map}}
\def\op{\mathsf{op}}
\def\rev{{\otimes \mathsf{op}}}
\def\env{\mathsf{env}}
\def\tr{\catf{tr}\,}
\newcommand{\ra}[1]{\xrightarrow{\   #1    \ }}
\newcommand{\Lexf}{\catf{Lex}^\catf{f}}
\newcommand{\Rexf}{\catf{Rex}^\catf{f}}
\newcommand{\Rex}{\catf{Rex}}
\newcommand{\naka}{\catf{N}^\catf{r}}
\newcommand{\hocolimsub}[1]{\underset{#1}{\operatorname{hocolim}}\,}
\newcommand{\Lex}{\catf{Lex}}
\newcommand{\vect}{\catf{vect}}
\newcommand{\cat}[1]{\mathcal{#1}}
\newcommand{\nakar}{\catf{N}^r}
\newcommand{\monthyeardate}{%
	\DTMenglishmonthname{\@dtm@month}, \@dtm@year
}
\newtheorem*{theorem*}{Theorem}
\newtheorem*{corollary*}{Corollary}
\renewcommand\section{\@startsection {section}{1}{\z@}%
	{-3.5ex \@plus -1ex \@minus -.2ex}%
	{2.3ex \@plus.2ex}%
	{\normalfont\scshape\centering}}
\titleformat{\subsection}[runin]
{\normalfont\slshape}
{\thesubsection}
{0.5em}
{}
[.]
\begin{document}

	\vspace*{-1.5cm}	\begin{center}	\textbf{\large{Pivotal Module Categories, Factorization Homology \\[0.5ex]
				and Modular Invariant Modified Traces	}} \\ 
		\vspace{1cm}{\large Jorge Becerra \quad and \quad Lukas Woike }\\ 	\vspace{5mm}{\slshape  Université Bourgogne Europe\\ CNRS\\ IMB UMR 5584\\ F-21000 Dijon\\ France }\end{center}	\vspace{0.3cm}	
	\begin{abstract}\noindent 
		The algebraic notion of a pivotal module category was developed by Schaumann and Shimizu and is central to the description of boundary conditions in conformal field theory according to a proposal by Fuchs and Schweigert. In this paper, we present a large class of examples of pivotal module categories of topological origin: For a unimodular finite ribbon category $\mathcal{A}$, we prove that the factorization homology $\int_\Sigma \mathcal{A}$ of a compact oriented surface $\Sigma$ with $n$ marked boundary intervals, at least one per connected component, comes with the structure of a pivotal module category over $\mathcal{A}^{\boxtimes n}$. This endows the internal skein algebras of Ben-Zvi-Brochier-Jordan, in particular the elliptic double, with a symmetric Frobenius structure. As application, we obtain, for each choice of $\mathcal{A}$, a family of full open conformal field theories, each of which comes with correlation functions for all surfaces with marked boundary intervals that are explicitly computable using factorization homology.	As a further application, we explain how modified traces can be `integrated' over surfaces: We show that the modified trace for $\mathcal{A}$ extends in a canonical way to the factorization homology of $\Sigma$. The resulting traces have the remarkable property of being modular invariant, i.e.\ fixed by the mapping class group action.
\end{abstract}

	\tableofcontents
	\normalsize

	\section{Introduction and summary}
	Suppose that we are given a \emph{finite tensor category} $\cA$
	 in the sense of Etingof-Ostrik~\cite{etingofostrik}, i.e.\ a linear, abelian, rigid, monoidal category subject to appropriate finiteness conditions. Such a category could arise for instance 
	from Hopf algebras or within the framework of two-dimensional conformal field theory, see e.g.~\cite{kassel,egno,algcften} for more background.
	While finite tensor categories are already
	 algebraic objects of independent interest that are often of representation-theoretic origin, 
	  they also have a representation theory themselves: They may act on other categories. This is an idea formalized through the notion of a \emph{module category}, see e.g.~\cite[Chapter~7]{egno} for a textbook reference.
	A finite module category $\cM$ over a finite tensor category $\cA$ comes with an action $-\ogreaterthan-:\cA\boxtimes \cM\to \cM$ inside a suitable symmetric monoidal bicategory of finite categories.
	It has an \emph{internal hom} $\HOM:\cM^\op \boxtimes \cM\to \cA$ that is characterized by isomorphisms
$$	\Hom_{\cM}(X \ogreaterthan M,N) \cong \Hom_\cA (X, \HOM(M,N)) $$ natural in $X\in\cA$ and $M,N\in\cM$. 

Let $\cM$ be a finite module category over a finite tensor category $\cA$ whose  rigid duality we denote by $-^\vee$.
We will be interested throughout in the situation in which $\cA$ is \emph{pivotal}, i.e.\ equipped with a monoidal isomorphism $\omega: \id_\cA \cong -^{\vee \vee}$. This identifies the left and right duality, so we will not distinguish between them.
It is now a natural question whether
$\cM$ comes with
natural isomorphisms
$$	\HOM(M,N)^\vee \cong \HOM(N,M) $$
 that square to the identity relative to the pivotal structure. 
The core idea of such a structure goes back to Schaumann~\cite{gstrace,schaumannpiv}; it is defined also by Shimizu~\cite{relserre} under the name \emph{pivotal module category}.

The internal endomorphism algebras of a pivotal module category over a pivotal finite tensor category $\cA$ happen to be symmetric Frobenius algebras in $\cA$~\cite[Theorem~3.15]{relserre}. Relying, among other things, on this extremely important algebraic fact, Fuchs-Schweigert postulate in \cite{fspivotal} that a pivotal module category is exactly the algebraic structure that one should encounter on the category of boundary conditions in logarithmic conformal field theory. More precisely, they explain how the composition of internal endomorphisms of a pivotal module category is the correct framework to describe the boundary operator product expansion.

\subsection*{Pivotal module categories from factorization homology}
The motivation for the notion of a pivotal module category as well as the sources of examples known to date are algebraic.
The purpose of this article is to
exhibit a \emph{topological}  source of pivotal module categories and to use this to rigorously construct families of full open conformal field theories and modular invariant modified traces.

To this end, we will need more structure on the finite tensor category $\cA$. More specifically, we will assume that $\cA$ is a \emph{finite ribbon category}, i.e.\ it is additionally equipped with a braiding and a ribbon structure. 
In particular, $\cA$ is a framed $E_2$-algebra in a suitable symmetric monoidal bicategory of finite categories~\cite{salvatorewahl} which allows to define the \emph{factorization homology} $\int_\Sigma \cA$ of $\cA$ on a compact oriented surface $\Sigma$ (possibly with boundary)
following \cite{AF,higheralgebra}. This is again a finite category that one should think of as the result of `integrating' $\cA$ over $\Sigma$. 

For the datum of $n\ge 1$ parametrized intervals in $\Sigma$, at least one in each connected component of $\Sigma$ (Figure~\ref{figsigma}), the category $\int_\Sigma \cA$ becomes a module over $\cA^{\boxtimes n}$. This $\cA^{\boxtimes n}$-module is considered by Ben-Zvi-Brochier-Jordan in \cite{bzbj} and is used to describe the entire category $\int_\Sigma \cA$ via (generalizations of) the \emph{moduli algebras} of Alekseev-Grosse-Schomerus~\cite{alekseevmoduli,agsmoduli,asmoduli} that appear as the 
internal endomorphism algebras 
\begin{equation}\label{eqnmodulialgebraxy}
	\mathfrak{a}_\Sigma:= \END(\cat{O}_\Sigma^\cA)
\end{equation}	of the \emph{quantum structure sheaf} $\cat{O}_\Sigma^\cA\in\int_\Sigma \cA$, i.e.\ the object in factorization homology induced by the unique embedding $\emptyset \to \Sigma$.
This moduli algebra carries a mapping class group action by algebra maps constructed in the semisimple Hopf algebraic case in \cite{alekseevmoduli,agsmoduli,asmoduli}  and more generally in  \cite[Section~5.4]{bzbj}.

\begin{figure}[h]
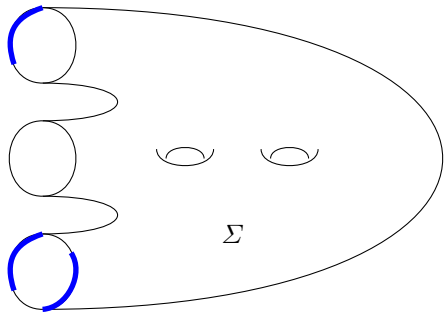

	\centering
\tikzfig{Sigma}
\label{figsigma}
\caption{A compact oriented surface $\Sigma$ with three parametrized intervals (in blue)
in its boundary.}
\end{figure}

The main result of this article is that $\int_\Sigma \cA$ is a pivotal $\cA^{\boxtimes n}$-module if $\cA$ is a \emph{unimodular} finite ribbon category.
Recall that $\cA$ is called \emph{unimodular} if the distinguished invertible object $\alpha \in \cA$ controlling the quadruple dual via $-^{\vee\vee\vee\vee}\cong \alpha \otimes -\otimes \alpha^{-1}$~\cite{eno-d} is isomorphic to the monoidal unit.

\begin{reptheorem}{thmmain}
Let $\cat{A}$ be a unimodular finite ribbon category with chosen trivialization $\alpha \cong \mathbbm{1}$ of the distinguished invertible object,
	and let $\Sigma$ be
	a compact, oriented surface  with $n$ marked boundary intervals, at least one per connected component.
	Then the factorization homology $\int_\Sigma \cat{A}$
	comes with the structure of a pivotal left
	 $\cat{A}^{\boxtimes n}$-module category. 
\end{reptheorem}

\subsection*{Connection to full open conformal field theories} 

In order to explain how this result can be applied in conformal field theory, let us introduce some terminology.  The \emph{open surface operad} $\mathsf{O}$~\cite{giansiracusa,sn,envas} is the groupoid-valued operad whose groupoid $\mathsf{O}(n)$ of operations of arity $n$ has as
 objects compact, oriented surfaces with at least one boundary component per connected component together with $n+1$ parametrized intervals embedded in their boundary; the morphisms are mapping classes
 (isotopy classes of diffeomorphisms preserving the orientation and the parametrized intervals)
  and operadic composition is given by gluing surfaces along the parametrized intervals. The open surface operad is in fact a \emph{modular operad} in the sense of \cite{gkmod}.

 A  \emph{two-dimensional categorified open topological field theory}~\cite{envas}, also referred to as an \emph{open modular functor},
  is defined as a  modular $\mathsf{O}$-algebra that (at least in the context of this article) takes values  in the symmetric monoidal bicategory $\Lexf$ of finite categories, left exact functors and natural transformations. Such a structure is exactly the mathematical description
  of the monodromy data of an \emph{\underline{open} conformal field theory}; this is informed by the notions
  \cite{Lazaroiu,costellotcft,ffrsunique} 
   and defined in this form \cite[Section~8]{microcosm} in the context of the modular microcosm principle.

  After unpacking the definition, a $\Lexf$-valued
  	 open modular functor consists of the data of a finite category $\cA$ and a rule that associates to every surface $\Sigma$ with at least one boundary component per connected component, $n$ marked boundary intervals (again as in Figure~\ref{figsigma}) and $n$ objects $X_1, \ldots, X_n \in \cA$ attached to the boundary intervals, a vector space, the so-called \emph{space of (open) conformal blocks}, which carries an action of the mapping class group of the surface and is compatible with gluing along parametrized intervals. By definition, the space of conformal blocks is left exact in the boundary labels.

According to \cite[Theorem 2.2]{envas},  a $\Lexf$-valued open modular functor is, by evaluation on disks with marked intervals in their boundary, equivalent to the datum of a \emph{pivotal Grothendieck-Verdier category} in $\Lexf$ in the sense of \cite{bd}. In particular, the underlying pivotal finite tensor category of a finite ribbon category $\cA$ extends to a unique open modular functor that we denote by $\Ao$. 

Within the framework of open modular functors, it is a natural question to ask about correlators.   A \emph{consistent system of correlators}
for the open conformal field theory with monodromy data $\cA$ amounts to a self-dual object $F\in\cA$ plus a family 
of vectors $\xi_\Sigma^F \in \Ao(\Sigma; F, \ldots , F) $ inside the spaces of open conformal blocks for all surfaces $\Sigma$
that is fixed by the action of the mapping class group and that solves the  so-called \emph{sewing constraints}, meaning  that the family is compatible with gluing along the parametrized boundary intervals. For more background, we refer to
 \cite{frs25, algcften}. Note that what we describe here are correlators for an open conformal field theory; these are different from so-called bulk field correlators, even though both notions are intimately connected \cite{ffrsunique,correlators}.
 The pair $(\Ao , \xi^F)$ 
 should be thought of as a \emph{full} open conformal field theory; $\Ao$ describes 
  the monodromy data of the open conformal field theory while the consistent system of correlators is a mathematically precise incarnation of the \emph{solution} of the theory, see e.g.~the introductions of \cite{csrcft,fspivotal,rcftsn,correlators} for more background.

One of the main results of \cite{microcosm} is that consistent systems of open correlators for $\Ao$ are in one-to-one correspondence with symmetric Frobenius algebras in $\cA$
(this remains true for Grothendieck-Verdier categories, even through we do not need it here), extending the corresponding result in the  semisimple rigid case  \cite{ffrsunique}. We use this to prove that for a 
compact, connected, oriented surface $\Omega$ with a single marked boundary interval,
the \emph{moduli algebra} $\mathfrak{a}_\Omega:= \END(\cat{O}_\Omega^\cA)\in\cA$
from~\eqref{eqnmodulialgebraxy}
inherits a symmetric Frobenius structure and extends to a system of open correlators of geometric origin:

\begin{repcorollary}{coropen}
	Let $\cA$ be a unimodular finite ribbon category and denote by $\Ao$ the unique open modular functor whose evaluation on disks with marked boundary intervals is given by $\cA$ as pivotal finite tensor category.
	Fix a compact, connected, oriented surface $\Omega$ with a single marked boundary interval. 
	Then the moduli algebra $\mathfrak{a}_\Omega$ gives rise to a consistent system of open correlators
$$ \xi_\Sigma^{\Omega} \in \Ao(\Sigma; \mathfrak{a}_\Omega,\dots,\mathfrak{a}_\Omega) \quad \text{for all}\quad \Sigma \in \O(n)$$
	that define a full open conformal field theory $(\Ao,\xi^\Omega)$. 
	Explicitly, the datum of $ \xi_\Sigma^{\Omega}$ amounts equivalently to 
	\begin{pnum}
		\item a map
$$
		\cat{O}_\Sigma^\cat{A} \to \mathfrak{a}_\Omega^{\boxtimes n} \ogreaterthan\cat{O}_\Sigma^\cat{A}
$$
in $\int_\Sigma \cat{A}$
that is $\Map(\Sigma)$-invariant with respect to the  action of the mapping class group $\Map(\Sigma)$ which arises from the homotopy fixed point structure of $\cat{O}_\Sigma^\cat{A}$,\label{item:i_cor}

	\item a map 
$$
		\mathfrak{a}_\Omega^{\boxtimes n} \to \mathfrak{a}_\Sigma
$$ in $\cA^{\boxtimes n}$ that is invariant under the action of $\Map(\Sigma)$ on $\mathfrak{a}_\Sigma$ constructed by Alekseev-Grosse-Schomerus and more generally Ben-Zvi-Brochier-Jordan.  \label{item:ii_cor}
\end{pnum}
\end{repcorollary}

The open correlators can be explicitly calculated using the Frobenius structure of $\mathfrak{a}_\Omega$, see
\cref{excalculation}. The mapping class group $\Map(\Omega)$ of $\Omega$ acts by symmetries of the full open conformal field theory, see \cref{remsymmetry}.

\subsection*{Modular invariant modified traces}
\emph{Modified traces}~\cite{geerpmturaev} are a non-semisimple replacement for quantum traces and have become an indispensable tool in modern quantum topology, most notably for the construction of the Costantino-Geer-Patureau-Mirand manifold invariants~\cite{cgp}.
A unimodular finite ribbon category $\cA$ has a modified trace on its tensor ideal $\Proj \cA$ of projective objects~\cite{mtrace}, i.e.\
a family of linear maps
$$
\tr\!_P : \Hom_{\cA}(P,P)\to k
$$
indexed by $P \in  \Proj \cA$ that are 
\begin{itemize}
	\item \emph{cyclic}, i.e.\ they descend to a $k$-valued map
\begin{align}\label{eq:description_HH_intro}
	\tr : H\! H_0(\cA) := \left.\bigoplus_{P\in \Proj \cA }
	\Hom_{ \cA}(P,P) \right/\! 	\! \!   \catf{span}(g\circ f- f\circ g)  \  \to k \ , 
	\end{align}
where we divide out by the subspace spanned by $g\circ f- f\circ g$  for morphisms $f:P\to Q$ and $g:Q\to P$ between projective objects,
\item \emph{non-degenerate}, i.e.\
the pairings
\begin{align}
	\Hom_{ \cA}(P,Q) \otimes \Hom_{ \cA}(Q,P) \to k \ , \quad f \otimes g \mapsto \tr\!_P (g\circ f)=\tr\!_Q(f\circ g)
\end{align} are non-degenerate for all $P,Q \in \Proj\cA$,
\item and have the \emph{partial trace property}, i.e.\
for $X\in \cA$ and $P\in\Proj\cA$ we have
\begin{align}
	\tr\!_{X\otimes P}(    f      ) = \tr\!_{ P} (p_X(f)) 
	\end{align}
for every map $f:X\otimes P\to X\otimes P$, with $p_X(f):P \to P$ being the \emph{partial trace} of $f$ defined as the composite 
$$ P \ra{\widetilde b_X\otimes \id_P} {^\vee }X \otimes X \otimes P \ra{\vartheta \otimes f} X^\vee \otimes X \otimes P \ra{d_X\otimes \id_P} P  $$
for 
\begin{itemize}
	\item the coevaluation $\widetilde b_X: I\to {^\vee }X \otimes X$,
	\item the identification $\vartheta : {^\vee }X \cong X^\vee  $ induced by the pivotal structure,
	\item the evaluation $d_X:X^\vee \otimes X \to I$.
	\end{itemize}
(This is the left partial trace property; an analogue on the right holds automatically by \cite[Corollary 6.12]{shibatashimizu} because $\cA$ is ribbon.)
\end{itemize}
This modified trace on $\cA$ is in fact unique up scalar multiple, and a  choice of a modified trace amounts to a choice of trivialization $\alpha \cong \mathbbm{1}$ of the distinguished invertible object of $\cA$.

Since the factorization homology $\int_\Sigma \cA$ 
is the result of `integrating' $\cA$ over $\Sigma$, the following question is natural: \emph{Can we also integrate the modified trace of $\cA$ over $\Sigma$  such that we recover the original modified trace whenever $\Sigma$ is a disk?} In more detail, one may ask
whether we can construct trace functions
\begin{align}
	\tr\!^\Sigma : H \! H_0\left(\int_\Sigma \cA\right)=\left.\bigoplus_{P\in \Proj \int_\Sigma \cA }
	\Hom_{\int_\Sigma \cA}(P,P) \right/\! 	\! \! {\sim} \  \to k   \label{eqntracefunctionsi}
\end{align} satisfying the analogous properties of the ones mentioned above in the module category setting, and maybe even more. Most crucially, since the mapping class group $\Map(\Sigma)$ acts on $H \! H_0\left(\int_\Sigma \cA\right)$, we should ask whether the traces  can be made invariant under the mapping class group action.
We use \cref{thmmain} to show	 that we can indeed obtain such modular invariant traces:

\begin{reptheorem}{thmmodtrace}[Modular invariant modified traces]
Let $\cat{A}$ be a unimodular finite ribbon category with a chosen trivialization $\alpha \cong \mathbbm{1}$ of the distinguished invertible object,
	and let $\Sigma$ be
	a compact, oriented surface  with $n$ parametrized boundary intervals, at least one per connected component.
	Then there are trace maps 
	\begin{align}
		\tr\!^\Sigma : H \! H_0\left(\int_\Sigma \cA\right)=\left.\bigoplus_{P\in \Proj \int_\Sigma \cA }
		\Hom_{\int_\Sigma \cA}(P,P) \right/\! 	\! \! {\sim} \  \to k   \label{eqntracefunctions}
	\end{align} originating from the pivotal $\cA^{\boxtimes n}$-module structure from \cref{thmmain}
that have the following properties:
\begin{enumerate}
	\item[(N)] The traces are non-degenerate in the sense that the pairings
$$
		\Hom_{\int_\Sigma \cA}(P,Q) \otimes \Hom_{\int_\Sigma \cA}(Q,P) \to k \ , \quad f \otimes g \mapsto \tr\!^\Sigma (g\circ f)=\tr\!^\Sigma(f\circ g)
$$ for $P,Q \in \Proj \int_\Sigma \cA$ are non-degenerate.
	\item[(M)] The traces are modular invariant, i.e.\ preserved by the action of the mapping class group $\Map(\Sigma)$ on $H \! H_0\left(\int_\Sigma \cA\right)$.
	\item[(L)] The traces are local, i.e.\ if $\Sigma'$ arises from $\Sigma$ by gluing a pair of marked intervals together, then the excision equivalence $\cat{A}	\boxtimes_{\cA^\env} \int_\Sigma \cat{A} \simeq \int_{\Sigma'} \cat{A}$ preserves the traces (on the left side, we have the traces originating from Proposition~\ref{propgluing2}).
	
	\item[(P)] The traces have the higher genus partial trace property, i.e.\ for each $X\in\cA^{\boxtimes n}$, $P\in\Proj \int_\Sigma \cA$ and $f: X \ogreaterthan P \to  X \ogreaterthan P$, we have
$$
		\tr\!^\Sigma(f)=\tr\!^\Sigma(\close_{X|P}(f)) \ , 
$$
	for the Shibata-Shimizu closing operator
$$ \close_{X|P}:\Hom_{\int_\Sigma \cA}(X \ogreaterthan P,X \ogreaterthan P )\to \Hom_{\int_\Sigma \cA}( P, P )
$$ for the pivotal $\cA^{\boxtimes n}$-module $\int_\Sigma \cA$.
	\end{enumerate}
The restriction to marked disks amounts to a two-sided modified trace on $\Proj \cA$. Conversely, given a 
 modified trace on $\Proj \cA$, it extends to a unique family of trace functions on $\int_\Sigma \cA$
  satisfying (N) and (L); the conditions (M) and (P) are then automatic.
\end{reptheorem}

The condition formulated using the closing operator introduced by Shibata-Shimizu~\cite[Section~5.1]{shibatashimizu} amounts to a relative version of the partial trace property and 
will be recalled in \cref{sectraceHH}.

Theorem~\ref{thmmodtrace} implies in the skein-theoretic picture for factorization homology~\cite{cooke,brownhaioun,mwskein} that skein categories of unimodular finite ribbon categories come with modular invariant
modified traces. However, we do not see a proof directly relying on skein theory. Most certainly, proving this on generators of the mapping class group does not seem doable.
In fact, the argument for  Theorem~\ref{thmmodtrace} reads the construction from topology to correlation functions used for \cref{coropen} \emph{backwards}, i.e.\ it leverages the correlator construction for a purely mathematical result: This should not come as a surprise because in Theorem~\ref{thmmodtrace} we try to build a modular invariant quantity that is compatible with gluing. 
In slightly more technical terms, we use that the pivotal structure in Theorem~\ref{thmmain} (and  the associated traces)
 is constructed from a gluing procedure for a ribbon graph model of the surface, but is ultimately independent of this ribbon graph model. 
This means that a mapping class will map the traces in \cref{thmmodtrace} built in reference to a ribbon graph model
 to the traces built
 in reference to a different ribbon graph model (the one obtained by acting with the mapping class).
 But this is the \emph{same} trace because of the independence of the ribbon graph model. 
This is exactly the approach  towards the construction of correlation functions taken in \cite{microcosm,correlators} generalizing the classical constructions~\cite{frs1,ffrsunique}.
	
		\vspace*{0.2cm}\textsc{Acknowledgments.} 
We thank Adrien Brochier, Patrick Kinnear, Lukas Müller and Christoph Schwei\-gert for helpful discussions related to the manuscript.
		JB and LW  gratefully acknowledge support
		by the ANR project CPJ n°ANR-22-CPJ1-0001-01 at the Institut de Mathématiques de Bourgogne (IMB).
		The IMB receives support from the EIPHI Graduate School (ANR-17-EURE-0002).

\section{Preliminaries}
In this section, we review some basic algebraic notions related to module categories over finite tensor categories as well as some generalities on factorization homology.  Unless otherwise stated, all categories, functors and natural transformations  will be assumed to \emph{$k$-linear} over a fixed algebraically closed  field $k$.

\subsection{Finite tensor categories}

Following \cite{etingofostrik}, a \emph{finite category} is an abelian category $\cA$ with finite-dimensional morphism
spaces, enough projective objects, finitely many isomorphisms classes of simple objects and where any object has finite length. An abelian category is finite if and only if it equivalent to the category $A$-$\catf{mod}$ of finite-dimensional modules over a finite-dimensional algebra $A$.

For a functor $F: \cA \to \cB$  between finite categories, 
\begin{equation}\label{lem:left_exact=right_adjoint}
\text{$F$ is left exact } \iff  \text{$F$ is a right adjoint} \qquad , \qquad  \text{$F$ is right exact } \iff  \text{$F$ is a left adjoint}\ , 
\end{equation} 
see e.g.~\cite[Corollary 2.3]{fss}. A finite category $\cA$ equipped with a rigid bilinear monoidal  product  with simple unit is called a \emph{finite tensor category}~\cite{etingofostrik}. For an object $X \in \cA$, we write $X^\vee$ and ${}^\vee X$ for its left and right dual, respectively.  There are adjunctions 
\begin{equation}\label{eq:adjunctions_duals_ftc}
X^\vee \otimes(-)\dashv X\otimes(-)\dashv {}^\vee X\otimes(-)\qquad , \qquad (-)\otimes {}^\vee X\dashv(-)\otimes X\dashv(-)\otimes X^\vee 
\end{equation}
e.g.\ \cite[Section 2.7]{shimizuunimodular},
so by  \eqref{lem:left_exact=right_adjoint} the monoidal product functor $\otimes : \cA \times \cA \to \cA $ is exact in each variable.

Given a finite tensor category $\cA = (\cA , \otimes, \mathbbm{1})$, we will write  
$ \cA^{\mathsf{op}} = (\cA^{\mathsf{op}}, \otimes, \mathbbm{1}) $ and  $ \cA^{\rev} = (\cA, \otimes^{\op}, \mathbbm{1}) $, where $
X \otimes^{\op} Y := Y \otimes X$ for $X,Y\in \cA$.

\subsection{Finite module categories and internal homs}\label{subsec:finite_mod_cats}

Let $\cC$ be a monoidal category. A \emph{left $\cC$-module structure} on a category $\cM$ is the data of an action functor $\ogreaterthan : \cC \times \cM \to \cM $ and a family of natural isomorphisms
$\mathbbm{1}\ogreaterthan M \cong M $ and $ (X \otimes Y)\ogreaterthan  M \cong X \ogreaterthan (Y \ogreaterthan M)  $ for $X,Y\in\cA$ and $M\in\cM$
satisfying  pentagon and triangle axioms similar to those for monoidal categories, see \cite[Section 7.1]{egno} for details. A \emph{right $\cA$-module structure} and an \emph{$\cA$-$\cB$-bimodule structure} are defined analogously.  Similarly, an \emph{$\cA$-module functor} $F: \cM \to \mathcal{N}$ between left $\cA$-module categories is a functor together with natural isomorphisms $ F(X \ogreaterthan M) \cong F(X) \ogreaterthan M   $ 
satisfying similar axioms \cite[Section 7.2]{egno}.

If $\cA$ is a finite tensor category, a finite category $\cM$ equipped with a  $\cA$-module structure is called a \emph{finite $\cA$-module category} if the functors $- \ogreaterthan M : \cA \to \cM$  are right exact for any $M \in \cM$. Similarly to \eqref{eq:adjunctions_duals_ftc}, there are adjunctions
\begin{equation}\label{eq:adjunctions_action_functor}
X^\vee \ogreaterthan (-)\dashv X\ogreaterthan (-)\dashv {}^\vee X\ogreaterthan(-)
\end{equation} 
so again  \eqref{lem:left_exact=right_adjoint} implies that the functor $X \ogreaterthan - : \cM \to \cM$ is always exact for any $X \in \cA$. This implies that 
 $X \ogreaterthan P$ for any $X \in \cA$ is projective whenever $P \in \cM$ is projective.

Let $\cA$ be a finite tensor category and $\cM$ a finite $\cA$-module category. For every $M \in \cM$, the functor $-	 \ogreaterthan M$ has a right adjoint by \eqref{lem:left_exact=right_adjoint}, that we denote by $\HOM(M,-): \cM \to \cA$. By definition, it comes with linear isomorphisms
\begin{equation}\label{eq:internal_hom_adj}
\Hom_{\cM}(X \ogreaterthan M,N) \cong \Hom_\cA (X, \HOM(M,N)) 
\end{equation}
natural in $X \in \cA$ and $N \in \cM$.
According to \cite[Theorem IV.7.3]{maclane}, the right adjoints in this family of adjunctions assemble in a unique way into a functor $\HOM:\cM^{\mathsf{op}} \times \cM \to \cA$
such that the isomorphism \eqref{eq:internal_hom_adj} is natural in the three variables. One calls $\HOM$ the \emph{internal hom functor}. This functor  is, just like the hom functor,  left exact in both variables. Also, for objects $X,Y \in \cA$ and $M,N \in \cM$,
 there is a natural isomorphism
\begin{equation}\label{eq:iso_2.15}
\HOM(X \ogreaterthan M, Y \ogreaterthan N) \cong Y \otimes \HOM(M,N) \otimes X^\vee,
\end{equation}
exhibiting $\HOM$ as an $\cA$-bimodule functor, 
see \cite[Lemma 7.9.4]{egno}.

The aforementioned internal homs equip $\cM$ with a structure of $\cA$-enriched category. In particular, for any object $M \in \cM$, the object $\END(M):= \HOM(M,M)$ is an algebra in $\cA$, and for any  $N \in \cM$ the object $\HOM(M,N)$ is a right $\END(M)$-module in $\cA$. We denote by $\mathsf{mod}_\cA{\text -}\END(M)$ the category of right $\END(M)$-modules in $\cA$. It is readily verified that the monoidal product turns it into  a left $\cA$-module category.

Recall that an object $M \in \cM$ is called \emph{$\cA$-projective} (respectively \emph{$\cA$-generator}, respectively \emph{$\cA$-progenerator}) if the functor $\HOM(M,-): \cM \to \cA$ is exact (respectively faithful, respectively exact and faithful). In the sequel we will make use of the following `monadicity theorem', e.g. \cite[Section 7.10]{egno}: If  $M \in \cM$, then the functor
\begin{equation}\label{eq:monadicity_thm}
\HOM(M,-): \cM \ra{\simeq}  \mathsf{mod}_\cA{\text -}\END(M) 
\end{equation}
is an equivalence of left $\cA$-module categories if and only if $M$ is an $\cA$-progenerator. Moreover, any finite $\cA$-module category has an $\cA$-progenerator.

\subsection{Balanced Deligne products}
Let $\cA$ be a finite tensor category and let $\cM$ and $\cN$ be finite right and left $\cA$-module categories, respectively. An \emph{$\cA$-balanced functor} from $\cM \times \cN$ to a finite category $\cE$  is a right exact functor $F: \cM \times \cN \to \cE$ together with a family of natural isomorphisms $ F(M \olessthan X, N) \cong F(M, X \ogreaterthan N)   $ satisfying  an analogue of the pentagon and triangle axioms. We denote by $\mathsf{Bal}_\cA (\cM \times \cN , \cE)$ the category of right exact $\cA$-balanced functors $\cM \times \cN \to \cE$ and natural transformations compatible with these natural isomorphisms.

If $\cA, \cM$ and $\cN$ are as above, their \emph{balanced Deligne product} is a finite category $\cM \boxtimes_\cA \cN$ together with a functor $
\boxtimes_\cA : \cM \times \cN \to \cM \boxtimes_\cA \cN$ 
which is right exact $\cA$-balanced and is universal with respect to this property, in the sense that precomposition with $\boxtimes_\cA $ induces an equivalence
$ \Rex (\cM \boxtimes_\cA \cN, \cE) \ra{\simeq} \mathsf{Bal}_\cA (\cM \times \cN , \cE) $
for any other finite category $\cat{E}$, see \cite{eno_fc} for this definition in the context of fusion categories. It is shown in \cite[Theorem 3.3]{dss} that the balanced Deligne product always exists, that the functor $\boxtimes_\cA $ is in fact exact and moreover that we have the following linear isomorphism:
\begin{equation}\label{eq:iso_DSS}
\Hom_{\cM \boxtimes_\cA \cN}(M \boxtimes_\cA N, M' \boxtimes_\cA N') \cong \Hom_\cA (\mathbbm{1}, \HOM(M,M') \otimes \HOM(N,N'))\ .
\end{equation}

It will be useful to have a concrete realization of the balanced Deligne product. For finite right and left $\cA$-module categories $\cM$ and $\cN$, choose equivalences 
 $\cM \simeq A \text{-} \mathsf{mod}_\cA$ and $ \cN \simeq \mathsf{mod}_\cA  \text{-} B$ for some  algebras  $A, B \in \cA$. According to \cite[Theorem 3.3]{dss}, the balanced Deligned product $\cM \boxtimes_\cA \cN $ can be realised as the category $A\text{-}\mathsf{mod}_\cA\text{-}B$ of $A$-$B$-bimodules in $\cA$; more precisely, the functor
\begin{equation}\label{eq:equiv_balanced_Deligne}
\otimes: A \text{-} \mathsf{mod}_\cA \boxtimes_\cA \mathsf{mod}_\cA  \text{-} B\ra{\simeq} A\text{-}\mathsf{mod}_\cA\text{-}B \qquad , \qquad M \boxtimes_{\cA} N \mapsto M \otimes N
\end{equation}
is an equivalence.  From this, we conclude the following statement, that we believe to be well-known  but whose proof we spell out in lack of a reference: 

\begin{corollary}\label{cor:projs_in_balanced_Deligne0}
	Let $\cA$ be a finite tensor category, $\cat{M}$ a finite right $\cA$-module category and $\cat{N}$ a finite left $\cA$-module category.
	Then the projective objects of $\cat{M}\boxtimes_\cA\cat{N}$ are direct summands of objects of the form $X\boxtimes Y$ with  objects $X\in \cat{M}$ and $Y\in\cat{N}$, where $X$ is projective. The same statement holds true for objects of the form $X\boxtimes Y$ with $Y$ projective.
\end{corollary}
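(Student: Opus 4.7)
The plan is to pass through the concrete realization \eqref{eq:equiv_balanced_Deligne} of the balanced Deligne product as a category of bimodules and there to identify an explicit projective generator via the free bimodule construction.

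First, I would invoke the monadicity theorem \eqref{eq:monadicity_thm} to pick $\cA$-progenerators $M \in \cM$ and $N \in \cN$, set $A := \END(M)$, $B := \END(N)$, and obtain equivalences $\cM \simeq A\text{-}\mathsf{mod}_\cA$ and $\cN \simeq \mathsf{mod}_\cA\text{-}B$ of module categories. Combined with \eqref{eq:equiv_balanced_Deligne}, this yields an equivalence $\cM \boxtimes_\cA \cN \simeq A\text{-}\mathsf{mod}_\cA\text{-}B$ under which the module-category actions translate to $M \olessthan X \leftrightarrow A \otimes X$ and $X \ogreaterthan N \leftrightarrow X \otimes B$ (since the monadicity equivalences intertwine the $\cA$-actions), and in particular $M \boxtimes_\cA N \leftrightarrow A \otimes B$.

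Next, I would locate a projective generator of $A\text{-}\mathsf{mod}_\cA\text{-}B$. The forgetful functor $U: A\text{-}\mathsf{mod}_\cA\text{-}B \to \cA$ is exact and admits as left adjoint the free bimodule functor $F(X) = A \otimes X \otimes B$. Hence, for any projective generator $G$ of $\cA$, the object $F(G) = A \otimes G \otimes B$ is projective because $\Hom_{A\text{-}\mathsf{mod}_\cA\text{-}B}(F(G), -) \cong \Hom_\cA(G, U(-))$ is a composite of exact functors, and it is a generator because $U$ and $\Hom_\cA(G, -)$ are both faithful. Consequently, every projective bimodule is a direct summand of $F(G)^{\oplus n} \cong A \otimes G^{\oplus n} \otimes B$ for some $n \ge 0$.

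Finally, I would translate the bound back to $\cM \boxtimes_\cA \cN$. Writing $A \otimes G^{\oplus n} \otimes B$ as the image under $\otimes$ of $(A \otimes G^{\oplus n}) \boxtimes_\cA B$ and using the identifications above, this object corresponds to $(M \olessthan G^{\oplus n}) \boxtimes_\cA N$. So every projective of $\cM \boxtimes_\cA \cN$ is a direct summand of $X \boxtimes_\cA N$ with $X := M \olessthan G^{\oplus n}$; projectivity of $X$ in $\cM$ follows from the natural isomorphism $\Hom_\cM(M \olessthan G^{\oplus n}, -) \cong \Hom_\cA(G^{\oplus n}, \HOM(M, -))$, a composite of exact functors since $M$ is an $\cA$-progenerator (so that $\HOM(M, -)$ is exact) and $G$ is projective in $\cA$. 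The companion statement with projective second factor is obtained symmetrically by decomposing instead $A \otimes G^{\oplus n} \otimes B$ as the image of $A \boxtimes_\cA (G^{\oplus n} \otimes B)$ and noting that $G^{\oplus n} \ogreaterthan N$ is projective in $\cN$ by the analogous computation.

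The main obstacle is the bookkeeping through the three successive equivalences — the two monadicity equivalences and the bimodule realization \eqref{eq:equiv_balanced_Deligne} of the Deligne product — in particular verifying the identifications $M \olessthan X \leftrightarrow A \otimes X$ and $X \ogreaterthan N \leftrightarrow X \otimes B$ so that the free bimodule $A \otimes G \otimes B$ is correctly matched with $(M \olessthan G) \boxtimes_\cA N$ and, symmetrically, with $M \boxtimes_\cA (G \ogreaterthan N)$. Once this is in place, the rest reduces to the standard free-forgetful argument producing a projective generator of a category of modules over an internal algebra.
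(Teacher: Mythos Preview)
Your proposal is correct and follows essentially the same approach as the paper: pass to the bimodule realization \eqref{eq:equiv_balanced_Deligne}, use the free--forgetful adjunction to exhibit $A\otimes P\otimes B$ (for $P\in\Proj\cA$) as a projective object dominating all projectives, and then translate back via the balancing isomorphism $(A\olessthan P)\boxtimes_\cA B\cong A\boxtimes_\cA(P\ogreaterthan B)$. The only cosmetic difference is that the paper argues via a projective cover $P\twoheadrightarrow T$ in $\cA$ followed by the action epimorphism $A\otimes T\otimes B\twoheadrightarrow T$, whereas you fix a single projective generator $G$ and invoke that $F(G)$ is a projective generator of the bimodule category; both variants are standard and yield the same conclusion.
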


\begin{proof}
	Let us choose identifications $\cM \simeq A \text{-} \mathsf{mod}_\cA$,  $ \cN \simeq \mathsf{mod}_\cA  \text{-} B$ and  $\cat{M}\boxtimes_\cA\cat{N} \simeq A\text{-}\mathsf{mod}_\cA\text{-}B $ for some algebras $A, B \in \cA$ as above. 
	As it is well-known, the projective objects of $A\text{-}\mathsf{mod}_\cA\text{-}B $ 
	are precisely the direct summands of the free modules $A\otimes P \otimes B$, where $P \in \cA$ is projective.
	Indeed, $A\otimes P \otimes B$ is projective because it is the image of the free bimodule functor $$A \otimes - \otimes B : \cA\to A\text{-}\mathsf{mod}_\cA\text{-}B ,$$  which preserves projective objects since its right adjoint,  the forgetful functor, is exact. 
	Conversely, given a projective $A$-$B$-bimodule $T$, consider a projective object $P$ in $\cA$ 
	with epimorphism $P\to T$. Applying the free bimodule functor we get an epimorphism  $A\otimes P\otimes B\to A\otimes T\otimes B$ since right exact functors preserve epimorphisms. The composite  $$B\otimes P\otimes C\to B\otimes T\otimes C\to T$$ with the action structure map (which is an epimorphism)  is again an  epimorphism in $A\text{-}\mathsf{mod}_\cA\text{-}B $.  As $T$ is projective, the epimorphism
	$B\otimes P\otimes C\to  T $ splits, thereby exhibiting $T$ as direct summand of  $A\otimes P\otimes B$. 
	
	This gives us the statement of the Lemma
	 since under the equivalence \eqref{eq:equiv_balanced_Deligne} the free bimodule  $A \otimes P \otimes B$  corresponds to the object  $(A \olessthan P) \boxtimes_{\cA} B \cong A \boxtimes_{\cA} (P  \ogreaterthan B)$ in $\cat{M}\boxtimes_\cA\cat{N}$, where 	 $A\olessthan P$ is a projective left $A$-module and  $P \ogreaterthan B$ is a projective right $B$-module.
\end{proof}

\subsection{The distinguished invertible element and unimodularity}
If $\cA$ is a finite tensor category, its \emph{envelope} is $\cA^\env := \cA \boxtimes \cA^\rev$, which is a finite tensor category by  {\cite[Proposition 5.17]{deligne}. The category $\cA$ can be viewed as a left $\cA^\env$-module category with action functor determined by 
$ (X \boxtimes Y) \ogreaterthan Z = X \otimes Z \otimes Y.  $
The algebra $\Delta :=\END(\mathbbm{1}) \in \cA^\env$ is often  called the \emph{canonical algebra} of $\cA$. In fact, according to \cite[Lemma 4.4]{shimizuunimodular}, $\Delta$ can be expressed as the coend $
\Delta \cong \int^{X \in \cA} X^\vee  \boxtimes X $  
(we refer the reader to \cite[Section~2.2]{fss} for an introduction to coends and their properties).
We choose the notation $\Delta$ for the canonical algebra because its underlying object happens to be also the coevaluation object for $\cA$ as cyclic associative algebra~\cite{cyclic} that is denoted by the same symbol.

By \cite[Proposition 4.15]{bzbj} monoidal unit $\mathbbm{1} \in \cA$ is  an $\cA^\env$-progenerator. By \eqref{eq:monadicity_thm}, $ \HOM(\mathbbm{1},-): \cA \ra{\simeq} \mathsf{mod}_{\cA^\env}\text{-} \Delta $
is an equivalence of $\cA^\env$-module categories. 
The unique (up to isomorphism)  object $\alpha \in \cA$  such that $
(\alpha \boxtimes \mathbbm{1}) \otimes \Delta \cong \Delta^\vee$ is called the \emph{distinguished invertible object}. By \cite[Theorem 3.3]{eno-d}, one may describe the quadruple dual through an isomorphism $(-)^{\vee \vee \vee \vee} \cong \alpha \otimes - \otimes \alpha^{-1}$. Note that if $\cA$ is unimodular, the isomorphism $\alpha \cong \mathbbm{1}$ induces an
isomorphism
\begin{equation}\label{eqnpsi}
\psi : \Delta \ra{\cong} \Delta^\vee.
\end{equation} 
If $\cA$ comes with a pivotal structure $\omega : \id_{\cA}\cong-^{\vee\vee}$, then it induces a pivotal structure on $\cA^\env$ also denoted by $\omega$ such that the composite   $\Delta \ra{\omega_\Delta}\Delta^{\vee \vee} \ra{\psi^\vee} \Delta ^\vee$ agrees with $\psi$.

\subsection{Nakayama functors}

Let $\cA, \cB$ be finite categories. By the Morita invariant version of the Eilenberg-Watts theorem given in \cite[Theorem 3.2]{fss}}, there is a zig-zag of adjoint equivalences
$$
\Lex(\cA,\cB) \xleftarrow{\ \simeq\ }  \cA^\op \boxtimes \cB \ra{\simeq} \Rex(\cA,\cB)\ .
$$
Because the identity functor of $\cA$ is both left and right exact, the above zig-zag gives rise to two canonical functors,
namely the \emph{left Nakayama functor} $\N^\ell \in \Lex(\cA, \cA)$ and the \emph{right Nakayama functor} $\N^r \in \Rex(\cA, \cA)$. By  \cite[Proposition 3.20]{fss} we have
\begin{equation}
\N^\ell_{\cA \boxtimes \cB} \cong \N^\ell_{\cA } \boxtimes \N^\ell_{\cB} \qquad , \qquad  \N^r_{\cA \boxtimes \cB} \cong \N^r_{\cA } \boxtimes \N^r_{\cB}
\end{equation}
for finite categories $\cA$ and $\cB$. Also, by \cite[Lemma~4.7]{shibatashimizu} or \cite[Corollary~2.3]{tracesw}, for any finite category $\cA$ and projective objects $P,Q \in \cA$, there are  natural isomorphisms
\begin{equation}\label{eq:Nr-twisted_CY}
\Hom_{\cA} (P,Q) \cong \Hom_{\cA}(Q, \N^r P)^*  \ .
\end{equation}

Moreover, if $\cM$ is a finite left module category over a finite tensor category $\cA$, then thanks to \cite[Proposition 3.18]{fss} the left and right Nakayama functors of $\cM$ have a natural structure of $(-)^{\vee \vee}$-twisted and ${}^{\vee \vee}(-)$-twisted $\cA$-module functors, respectively. Writing $G=(-)^{\vee \vee}$ or $  {}^{\vee \vee}(-)$, this means that they can be viewed as module functors $\cM \to {}_{G}\cM$, where ${}_{G}\cM$ stands for $\cM$ with the module structure twisted by $X \ogreaterthan_G M := G(X) \ogreaterthan M$. Explicitly, there are natural isomorphisms 
\begin{equation}\label{eq:Nakayama_twisted}
\N^\ell(X \ogreaterthan M) \cong  X^{\vee \vee} 	\ogreaterthan  \N^\ell( M) \qquad , \qquad \N^r(X \ogreaterthan M) \cong  {}^{\vee \vee} X	\ogreaterthan  \N^r( M)\ ,
\end{equation}
satisfying the corresponding coherence conditions, see \cite[Theorem 4.4]{fss} and \cite[Section~2.5]{relserre}.

\subsection{Relative Serre functors}

Let $\cA$ be a finite tensor category and let $\cM$ be a finite left $\cA$-module category. A \emph{left} and \emph{right} \emph{relative Serre functor} of $\cM$~\cite{schaumannpiv,fss}
are endofunctors $\mathsf{S}^\ell, \mathsf{S}^r : \cM \to \cM$  together with a family of natural isomorphisms in $\cA$ 
\begin{equation}\label{eq:Serre_natiso}
\HOM (\mathsf{S}^\ell(M),N) \cong {}^\vee \HOM(N,M) \qquad  , \qquad \HOM (M,\mathsf{S}^r(N)) \cong \HOM(N,M)^\vee \ .
\end{equation}
If they exist, then they are essentially unique by the Yoneda Lemma. It is shown in \cite[Proposition 4.24]{fss} that a finite left $\cA$-module category $\cM$ has left and right relative Serre functors if and only if $\cM$ is \emph{exact}, i.e.\ if all objects  are $\cA$-projective. 

\begin{example}\label{ex:Serre_funct_in_ftc}
Let $\cA$ be a finite tensor category viewed as a finite left module category over itself. Then  $\sS^\ell = {}^{\vee \vee}(-)$ and  $\sS^r = (-)^{\vee \vee}$ are readily seen to be  the left and right relative Serre functors (in particular $\cA$ is exact). This says that these functors can be interpreted as the analogs of the double left and right dual functors in module categories.
\end{example}

According to \cite[Lemma 4.23]{fss} and \cite[Section~3.3]{relserre}, a left or  right relative Serre functor of $\cM$ has a unique structure of twisted $\cA$-module functor such that the natural isomorphisms \eqref{eq:Serre_natiso}  are isomorphisms of $\cA$-bimodule functors. This implies that there are natural isomorphisms (compare with \eqref{eq:Nakayama_twisted})
\begin{equation}\label{eq:Serre_twisted}
\sS^\ell(X \ogreaterthan M) \cong {}^{\vee \vee} X 	\ogreaterthan  \sS^\ell( M) \qquad , \qquad \sS^r(X \ogreaterthan M) \cong  X^{\vee \vee} 	\ogreaterthan  \sS^r( M)\ . 
\end{equation}
The relative Serre functors are in fact very closely related to the Nakayama functors: If $\cM$ is an exact left $\cA$-module category over a    finite tensor category $\cA$ with distinguished invertible object $\alpha$, there are natural isomorphisms 
 \begin{equation}\label{eq:N=alpha.S}
 \N^\ell \cong \alpha \ogreaterthan  \sS^\ell \qquad , \qquad  \N^r \cong \alpha^{-1} \ogreaterthan  \sS^r 
 \end{equation}
of twisted module functors \cite[Theorem 4.26]{fss}.
In particular, for the left and right Nakayama functor of $\cA$ itself
we have natural isomorphisms
\begin{equation}\label{eq:N=alpha.(-)**}
\N^\ell \cong \alpha \otimes  {}^{\vee \vee} (-) \qquad , \qquad  \N^r \cong \alpha^{-1} \otimes  (-)^{\vee \vee}\ . 
\end{equation}

From this, the following can be read off: If $\cA$ and $\cB$ are finite tensor categories, the distinguished invertible object $\alpha_{\cA \boxtimes \cB}$ of $\cA \boxtimes \cB$ (which is a finite tensor category by {\cite[Proposition 5.17]{deligne}) and that of $\cA^\rev$ are given by 
\begin{equation}\label{eq:disting_inv_for_Deligne_product}
\alpha_{\cA \boxtimes \cB} \cong \alpha_\cA \boxtimes \alpha_\cB \qquad , \qquad \alpha_{\cA^\rev} \cong \alpha_\cA\ ,
\end{equation}
where $\alpha_\cA$, $\alpha_\cB$ and $\alpha_{\cA^\rev}$ denote the distinguished invertible objects of $\cA$, $\cB$ and  $\cA^\rev$, respectively.

\subsection{Pivotal module structures}\label{sectraceHH}

Let $\cA$ be a pivotal finite tensor category, let $\cM$ be an exact left $\cA$-module category and let $\sS^r$ be a right relative Serre functor of $\cM$. We regard $\sS^r$ as an (untwisted) $\cA$-module functor with structure morphism $$\sS^r(X \ogreaterthan M) \cong  X^{\vee \vee} 	\ogreaterthan  \sS^r( M) \cong X 	\ogreaterthan  \sS^r( M)\ ,$$ where the first isomorphism is \eqref{eq:Serre_twisted} and the second is induced by the pivotal structure of $\cA$. The same observation applies to the left relative Serre functor and the Nakayama functors.

According to \cite[Section~3.6]{relserre},
a \emph{pivotal structure} for $\cM$ is a trivialization  $\sS^r \cong \id_\cM  $ of the right relative Serre functor of $\cM$ as $\cA$-module functor. An exact  left $\cA$-module category $\cM$ endowed with a pivotal structure will be called a \emph{pivotal left $\cA$-module category}.

\begin{example}\label{exregularmodule}
Let $\cA$ be a pivotal finite tensor category, viewed as a left $\cA$-module category (by left multiplication). By \cref{ex:Serre_funct_in_ftc}, $\cA$ is exact and the left and right relative Serre functors can be trivialized by means of the pivotal structure. That this is a trivialization as module functors follows from the fact that a pivotal structure is a monoidal natural isomorphism. Hence this yields a pivotal module structure.
\end{example}

From~\eqref{eq:N=alpha.S}, we conclude the following fact that we will need in the sequel:

\begin{lemma}\label{lemmapivstr}
		Let $\cA$ be  a unimodular pivotal finite tensor category with a fixed trivialization
	$\alpha \cong \mathbbm{1}$ of the distinguished invertible object, and let $\cM$ be an exact left $\cA$-module category. Then
	a pivotal left $\cA$-module structure on $\cM$
	amounts precisely to a trivialization $\N^r \cong \id_\cM$ as $\cA$-module functor.
\end{lemma}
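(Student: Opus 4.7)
The plan is to derive the lemma directly from equation~\eqref{eq:N=alpha.S}, using the unimodular and pivotal hypotheses to compare the two sides as untwisted module functors. Recall that~\eqref{eq:N=alpha.S} gives a natural isomorphism $\N^r \cong \alpha^{-1} \ogreaterthan \sS^r$ of twisted module functors: the functor $\sS^r$ carries a canonical $(-)^{\vee\vee}$-twisted module structure, $\N^r$ carries a canonical ${}^{\vee\vee}(-)$-twisted module structure, and the factor $\alpha^{-1}$ mediates between the two twists via the identification $(-)^{\vee\vee\vee\vee}\cong \alpha\otimes -\otimes \alpha^{-1}$ of~\cite{eno-d}.

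First, I would use the chosen trivialization $\alpha\cong \mathbbm{1}$ to turn \eqref{eq:N=alpha.S} into a natural isomorphism $\N^r\cong \sS^r$. Second, I would use the pivotal structure $\omega:\id_\cA\cong (-)^{\vee\vee}$ of $\cA$, which identifies the left and right double duals in a canonical way, to simultaneously trivialize the two twists $(-)^{\vee\vee}$ and ${}^{\vee\vee}(-)$. After these two identifications, both $\sS^r$ and $\N^r$ become untwisted $\cA$-module endofunctors of $\cM$, and the above isomorphism promotes to an isomorphism $\N^r\cong \sS^r$ of (untwisted) $\cA$-module functors.

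Given such a canonical $\cA$-module isomorphism, trivializations $\sS^r\cong \id_\cM$ (which, by definition, are pivotal $\cA$-module structures on $\cM$) correspond bijectively to trivializations $\N^r\cong \id_\cM$, and this bijection is natural in the pivotal and unimodular data. This is exactly the content of the lemma.

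The main (mild) obstacle is the bookkeeping of twisted versus untwisted module-functor structures: one has to check that the module-functor structure on $\N^r$ obtained by combining $\N^r\cong \sS^r$ with the pivotal untwisting of $\sS^r$ agrees with the direct pivotal untwisting of the canonical ${}^{\vee\vee}(-)$-twisted structure on $\N^r$. This reduces to the coherence of the pivotal structure with the duality isomorphisms of~\cite{eno-d} and is essentially contained in~\cite[Theorem~4.26]{fss} and \cite[Section~3.3]{relserre}; once this compatibility is spelled out, the lemma follows immediately.
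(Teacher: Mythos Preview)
Your proposal is correct and follows exactly the approach the paper takes: the paper simply states that the lemma is concluded from~\eqref{eq:N=alpha.S}, and your argument is precisely the unpacking of that claim, using the trivialization $\alpha\cong\mathbbm{1}$ together with the pivotal structure to convert the twisted-module isomorphism $\N^r\cong\alpha^{-1}\ogreaterthan\sS^r$ into an untwisted one. The bookkeeping you flag is indeed the only subtlety, and as you note it is handled by the references already cited in the paper.
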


\subsection{A brief review of factorization homology}\label{subsec:review_FH}

Factorization homology \cite{higheralgebra,AF} is an invariant of $n$-dimensional manifolds that satisfies a generalization of the classical Eilenberg-Steenrood axioms for homology theories. In the present paper, we will only be interested in factorization homology $\int_\Sigma \cA$ of  an oriented surface $\Sigma$ with coefficients in a framed $E_2$-algebra  $\cA$  in the symmetric monoidal $(2,1)$-category $\Rexf$ of finite categories, right exact functors and natural isomorphisms, with the Deligne product as monoidal product; that is, $\cA$ is a certain balanced braided finite category.

The factorization  homology  $\int_\Sigma \cA$ is defined as  the homotopy colimit (or $(2,1)$-colimit)
\begin{equation*}
	\int_\Sigma \cA := \hocolimsub{\substack{
			( D^2)^{\sqcup n} \hookrightarrow \Sigma   \\ n\ge 0}} \cA^{\boxtimes n}
\end{equation*}
over all disk embeddings $( D^2)^{\sqcup n} \hookrightarrow \Sigma $ and all $n\ge 0$.

Here is a list of important properties of factorization homology, see \cite{AF,ginot,bzbj} for more background:

\begin{pnum}
	\item There is an equivalence $\int_{D^2} \cA \simeq \cA$.
	
	\item There is an equivalence $\int_{\bar{\Sigma}} \cA \simeq \int_\Sigma \bar{\cA}$  where $\bar{\Sigma}$ denotes a surface $\Sigma$ equipped with the opposite orientation and $\bar{\cA}$ is the balanced braided monoidal category endowed with the same monoidal product, but inverse braiding and  inverse balancing.

	\item Factorization homology is a \emph{canonically pointed} theory in the following sense: For a surface $\Sigma$, the  embedding $\emptyset \hookrightarrow \Sigma$ induces a functor $ \cat{O}_\Sigma^\cat{A}: \vect \simeq \int_{\emptyset} \cA \to \int_\Sigma \cA  $ that of course amounts 
	to an 
	object $\cat{O}_\Sigma^\cA\in\int_\Sigma\cA$ that is called the \emph{quantum structure sheaf} in \cite{bzbj}.

	\item If $( D^2)^{\sqcup n} \hookrightarrow \Sigma $ is an embedding of a disjoint union of disks which factors through a bigger disk, then the corresponding universal morphism $\cA^{\boxtimes n} \to \int_\Sigma \cA$ factors through the $n$-fold monoidal product functor ($E_2$-multiplication) $\cA^{\boxtimes n} \to \cA$.\label{propertyproduct}

	\item Given a compact, oriented 1-manifold $P$, stacking induces a canonical monoidal structure on $\int_{P \times [0,1]} \cA$  with unit $\cat{O}_{ P \times [0,1]}^\cA$.

	\item Given a surface $\Sigma$ with boundary $\partial \Sigma$, the category $\int_\Sigma \cA$ is naturally a left $\int_{\partial \Sigma \times [0,1]} \cA$-module, induced by choosing a  collar neighborhood of $\partial \Sigma$.
	
	\item In complete analogy to the Eilenberg-Steenrood axioms, factorization homology satisfies a local-to-global principle called \emph{excision}, which is the main computational tool for factorization homology. Suppose $\Sigma_1, \Sigma_2$ are compact, oriented surfaces  let $\Sigma:= \Sigma_1 \cup_{P \times [0,1]} \Sigma_2$ be the surface obtained by a collar gluing along a 1-manifold $P$.  We choose the monoidal structure on  $\int_{P \times [0,1]} \cA$ so that $\int_{\Sigma_2} \cA$ is a   left $\int_{P \times [0,1]} \cA$-module category, and   $\int_{\Sigma_1} \cA$ is therefore a  right $\int_{P \times [0,1]} \cA$-module category.
	The excision property then tells us that the embedding $\Sigma_1 \sqcup \Sigma_2 \hookrightarrow \Sigma$ induces an equivalence
	\begin{equation}\label{eq:excision}
		\int_{\Sigma_1} \cA \boxtimes_{\int_{P \times [0,1]} \cA } \int_{\Sigma_2} \cA \simeq \int_{\Sigma} \cA.
	\end{equation} In fact, factorization homology
	is characterized by $\int_{D^2} \cA \simeq \cA$ and excision, see
	\cite[Section~3.3]{AF} for the precise statement.  
\end{pnum}

\subsection{Factorization as a module category}

Let $\cA$
be a   balanced braided  category $\cA$ in $\Rexf$ and suppose that $\Sigma$ is an oriented, compact surface $\Sigma$ 
with at least one boundary component per connected component and
 $n$ parametrized intervals in   $\partial \Sigma$ that we call \emph{marked boundary intervals}.
Here by a parametrization we understand an orientation-preserving embedding $[0,1]^{\sqcup n} \to  \partial \Sigma$ whose image we denote by $P$. 
This means, in the language of \cite{envas}, that $\Sigma$ is an operation of the \emph{open surface modular operad}. 

By what was explained in the preceeding section $\int_\Sigma \cA$ comes with the structure of a left $\int_{P \times [0,1]} \cA$-module category, and the orientation-preserving diffeomorphism $[0,1]^{\sqcup n}\ra{\cong} P$ makes $\int_\Sigma \cA$ into a module over $\cA^{\boxtimes n}$ with action
\begin{align}  \ogreaterthan:   \cA^{\boxtimes n} \boxtimes \int_\Sigma \cA \to \int_\Sigma \cA  \ , 
	\end{align} see also \cite[Section~5]{bzbj}.

The operadic composition of the open surface operad is the gluing along intervals.
To glue $\Sigma$ along a pair of intervals $I,J \subset \Sigma$, the parametrization of one of the two  intervals along which we glue (it does not matter whether we choose the first or the second interval of the pair) needs to precomposed with the reflection. 
  This turns the $\cA^{\boxtimes 2}$-action on $\int_{\Sigma}\cA$
   corresponding to $I$ and $J$  into an action of $\cA^{\env}$; we just have to precompose the $\cA$-action corresponding to the interval that is being reflected by the equivalence
   $\id: \cA \ra{\simeq} \cA^\rev$ whose structure maps are given by the braiding.
   This also gives us an orientation-reversing diffeomorphism $\psi:I\ra{\cong} J$.
   The result of gluing $\Sigma$ along $I$ and $J$ is the surface $\Sigma'$ obtained from $\Sigma$ by diving out by $x\sim \psi(x)$ for $x\in I$. 
Excision gives us an equivalence
\begin{align}\label{eq:equiv_excision_one_handle}
	\cA	\boxtimes_{\cA^\env} \int_\Sigma \cA \ra{\simeq}  
\int_{\Sigma'} \cA
 \ . 
\end{align}

\section{Pivotal module structure on the factorization homology of a unimodular finite ribbon category}

The goal of this section is to establish the pivotal structure 
on the factorization homology of a unimodular finite ribbon category. This will be accomplished in  \cref{thmmain} below.

\subsection{The special case of disks}\label{subsec:case_disks}

For a disk $D^2$ with $n$ marked boundary intervals, we have $\int_{D^2} \cA\simeq \cA$, and the corresponding action functor 
	$$ \ogreaterthan : \cA^{\boxtimes n} \boxtimes \cA \simeq \cA^{\boxtimes n+1} \to \cA  $$
	is naturally isomorphic, by property~\ref{propertyproduct} of \cref{subsec:review_FH}, to the $(n+1)$-fold monoidal product of $\cA$. Therefore, $\int_{D^2} \cA$ agrees with the left regular $\cA^{\boxtimes n}$-module $\cA$.

 Once we reverse the orientation of one or several of the marked boundary intervals to perform a gluing, we obtain the following module categories:

\begin{definition}\label{defApq}
If $\cA$ is a braided finite tensor category and integers
$p,q \geq 0$ with $p+q\ge 1$, we let  ${}_p \cA_q$ be the finite left $\cA^{\boxtimes p}\boxtimes \left( \cA^{\rev} \right)^{\boxtimes q}$-module category whose underlying finite category is $\cA$ with module structure determined by $$(X_1 \boxtimes \cdots \boxtimes X_p \boxtimes Y_1 \boxtimes \cdots \boxtimes Y_q) \ogreaterthan Z=  X_1 \otimes \cdots \otimes  X_p \otimes Z \otimes Y_q \otimes \cdots \otimes Y_1 \ .  $$
\end{definition}

\begin{remark}\label{remexact}
It is easy to see that ${}_p \cA_q$ is actually exact. Indeed by \cref{cor:projs_in_balanced_Deligne0}  the projective objects of the Deligne tensor product $\cA^{\boxtimes p}\boxtimes \left( \cA^{\rev} \right)^{\boxtimes q}$ are the direct summands of the Deligne products of projective objects. Now if $T$ is a direct summand of $P_1 \boxtimes \cdots \boxtimes P_{p+q}$, then $T \ogreaterthan X$ is a direct summand of $P_1 \boxtimes \cdots P_p \boxtimes X  \boxtimes P_{p+1} \boxtimes \cdots \boxtimes P_{p+q}$, which is projective, hence so is   $T \ogreaterthan X$.
\end{remark}

As a first step, we will prove that that the module categories ${}_p \cA_q$ are pivotal if $\cA$ is unimodular ribbon. To this end, we will use the following Lemma that follows from the standard fact that for any monoidal functor $\varphi : \mathcal{C} \to \mathcal{D}$ between finite tensor categories 
the restriction functor from $\cat{D}$-module categories to $\cat{C}$-module categories is a 2-functor:

\begin{lemma}\label{lemmapullback}
	Let $\mathcal{C}, \mathcal{D}$ be finite tensor categories and let $\varphi : \mathcal{C} \to \mathcal{D}$ be a monoidal functor. Suppose that  $\cM$ is a finite left $\mathcal{D}$-module category, $F: \cM \to \cM$ a $\mathcal{D}$-module endofunctor and $\chi: F \cong \id_\cM$ a trivialization of $F$ as $\mathcal{D}$-module functor. If $\mathcal{M}_\varphi$ denotes the left $\mathcal{C}$-module category induced by restriction of scalars, and $\varphi^* F:\mathcal{M}_\varphi  \to \mathcal{M}_\varphi $ is the left $\mathcal{C}$-module functor induced by $F$, then the natural isomorphism $\varphi^* \chi : \varphi^* F \cong \id_{\cM_\varphi}$ induced by $\chi$ is a trivialization of  $\varphi^* F$ as $\cC$-module functor.
\end{lemma}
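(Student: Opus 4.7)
The plan is to observe that this lemma is a formal consequence of the fact that restriction of scalars along a monoidal functor is a 2-functor, so it preserves all the data at hand (module structures on categories, functors and natural transformations), and in particular preserves the property of being an isomorphism of module functors. I would either invoke this abstract 2-functoriality directly or, if a more hands-on argument is preferred, unpack what it means concretely.

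Concretely, recall that $\chi : F \Rightarrow \id_\cM$ being an isomorphism of $\cat{D}$-module functors means that for every $D \in \cat{D}$ and $M \in \cM$ the square
\[
\begin{tikzcd}[column sep=large]
F(D \ogreaterthan M) \ar[r, "F^{\cat{D}}_{D,M}"] \ar[d, "\chi_{D \ogreaterthan M}"'] & D \ogreaterthan F(M) \ar[d, "D \ogreaterthan \chi_M"] \\
D \ogreaterthan M \ar[r, equal] & D \ogreaterthan M
\end{tikzcd}
\]
commutes, where $F^{\cat{D}}_{D,M}$ is the $\cat{D}$-module structure morphism of $F$. By definition the $\cat{C}$-module structure on $\cM_\varphi$ is obtained by setting $C \ogreaterthan_\varphi M := \varphi(C) \ogreaterthan M$, and the $\cat{C}$-module structure of $\varphi^*F$ has the coherence isomorphisms $F^\cat{D}_{\varphi(C),M}$. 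Specialising the diagram above to $D = \varphi(C)$ with $C \in \cat{C}$ yields exactly the coherence condition required for $\varphi^*\chi$ to be a morphism of $\cat{C}$-module functors $\varphi^* F \Rightarrow \id_{\cM_\varphi}$. Since $\chi$ was pointwise an isomorphism, so is $\varphi^*\chi$.

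There is no real obstacle here: the content of the lemma is purely diagrammatic bookkeeping, and the only subtlety is to keep track of which structure morphisms are pulled back along $\varphi$. I would therefore keep the proof to a short paragraph, possibly just citing the 2-functoriality of restriction of scalars and noting that the stated compatibility is the image of the hypothesis under this 2-functor.
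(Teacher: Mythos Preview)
Your proposal is correct and matches the paper's own approach: the paper simply states that the lemma follows from the 2-functoriality of restriction of scalars along $\varphi$, which is exactly your first paragraph. Your concrete diagram merely unpacks this 2-functoriality, so there is nothing to add.
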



\begin{proposition}\label{lem:disk_FH}
	Let $\cA$ be  a unimodular finite ribbon category with a  fixed trivialization $\alpha \cong \mathbbm{1}$ of the distinguished invertible object of $\cA$. Then ${}_p \cA_q$ inherits a structure of pivotal module category.
	In particular, for a disk $D^2$ with $n$ marked boundary intervals,  the trivialization $\alpha \cong \mathbbm{1}$ induces a pivotal $\cA^{\boxtimes n}$-module structure on $\int_{D^2} \cA$. 
\end{proposition}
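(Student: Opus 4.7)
The strategy is to apply \cref{lemmapivstr}, which after verifying the prerequisites reduces the task to trivializing the right Nakayama functor $\N^r$ on ${}_p\cA_q$ as a $\cat{C}$-module functor, where $\cat{C}:=\cA^{\boxtimes p}\boxtimes (\cA^{\rev})^{\boxtimes q}$. This ambient category is unimodular with canonical trivialization of $\alpha_{\cat{C}}$ induced by $\alpha_\cA\cong\mathbbm{1}$ via~\eqref{eq:disting_inv_for_Deligne_product}, and pivotal because $\cA$ is ribbon (the pivotal structure of $\cA$ transports to $\cA^{\rev}$). Exactness of ${}_p\cA_q$ is already recorded in \cref{remexact}.

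The proposal is to use the pivotal structure $\omega:\id_\cA\cong(-)^{\vee\vee}$ of $\cA$ itself. By~\eqref{eq:N=alpha.(-)**} together with the trivialization $\alpha\cong\mathbbm{1}$, we have $\N^r\cong (-)^{\vee\vee}$ on $\cA$ as a plain endofunctor, and composing with $\omega^{-1}$ gives the candidate trivialization $\N^r\cong\id_\cA$. The main verification is that this is a $\cat{C}$-module natural isomorphism. The $\cat{C}$-module structure on $\N^r$ comes from~\eqref{eq:Nakayama_twisted} followed by untwisting via the pivotal structure of $\cat{C}$. On an object $W=X_1\boxtimes\cdots\boxtimes X_p\boxtimes Y_1\boxtimes\cdots\boxtimes Y_q$, compatibility reduces to the identity
\[ \omega_{X_1\otimes\cdots\otimes X_p\otimes Z\otimes Y_q\otimes\cdots\otimes Y_1}=\omega_{X_1}\otimes\cdots\otimes\omega_{X_p}\otimes\omega_Z\otimes\omega_{Y_q}\otimes\cdots\otimes\omega_{Y_1}, \]
which is exactly the monoidality of $\omega$. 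For the disk statement, property~(iv) of factorization homology in \cref{subsec:review_FH} identifies $\int_{D^2}\cA$ with ${}_n\cA_0$ as $\cA^{\boxtimes n}$-module, so the general construction applies.

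The main subtlety I anticipate is cleanly tracking how the twisted $\cat{C}$-module structure on $\N^r$, built from both the $\cA$- and $\cA^{\rev}$-components of $\cat{C}$, becomes untwisted so that the pointwise trivialization $\omega^{-1}$ respects the module structure. This is mostly conventional bookkeeping but requires care with the $\cA^{\rev}$ conventions (in particular the reversal of tensor order $Y_q\otimes\cdots\otimes Y_1$ in the action formula of \cref{defApq}). The algebraic fact enabling everything is that a pivotal structure is by definition a \emph{monoidal} natural isomorphism, so the apparent coherences automatically line up.
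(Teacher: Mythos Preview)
Your strategy via Lemma~\ref{lemmapivstr} is sound, but the claimed reduction of the compatibility check to the displayed monoidality identity for $\omega$ is not correct as written. The twisted $\cat{C}$-module structure on $\N^r$ from~\eqref{eq:Nakayama_twisted} involves the double \emph{right} dual ${}^{\vee\vee}_{\cat{C}}(-)$ in $\cat{C}$: on the $\cA$-factors this is ${}^{\vee\vee}X_i$ (not $X_i^{\vee\vee}$), while on the $\cA^{\rev}$-factors it is $Y_j^{\vee\vee}$. The pivotal structure of $\cat{C}$ used to untwist therefore does not act as $\omega_{X_i}$ and $\omega_{Y_j}$ uniformly; on the $\cA^{\rev}$-side it is the pivotal structure of $\cA^{\rev}$, which in $\cA$'s conventions is an isomorphism $Y\cong{}^{\vee\vee}Y$ rather than $\omega_Y$. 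Moreover, transporting the canonical Nakayama twist along the identification $\N^r\cong(-)^{\vee\vee}$ does not yield the monoidality isomorphism of $(-)^{\vee\vee}$ on the nose: an additional identification $X^{\vee\vee}\cong{}^{\vee\vee}X$ (supplied here by Radford's formula together with unimodularity) intervenes. So the ``bookkeeping'' you anticipate in your final paragraph is the actual content of the argument, not a side remark, and it has not been carried out.

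The paper avoids this direct computation. It treats $(p,q)\in\{(1,0),(0,1)\}$ via Example~\ref{exregularmodule}, then handles the bimodule case $(p,q)=(1,1)$ by computing the relative Serre functor through the internal hom: using $\HOM(X,Y)\cong(Y\boxtimes\mathbbm{1})\otimes\Delta\otimes(X^\vee\boxtimes\mathbbm{1})$ together with the self-duality $\Delta\cong\Delta^\vee$ from unimodularity, one finds $\sS^r\cong(-)^{\vee\vee}$, and the $\cA^\env$-module compatibility is checked via Yoneda and the bimodule coherence of the internal hom. The general case $p,q\ge 1$ is then obtained formally by restricting along the monoidal functor $\otimes^p\boxtimes\otimes^q:\cat{C}\to\cA^\env$ using Lemma~\ref{lemmapullback}. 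This route has two advantages over yours: unimodularity is isolated in the single concrete isomorphism $\Delta\cong\Delta^\vee$, and the passage to arbitrary $(p,q)$ is pure $2$-functoriality of restriction of scalars, so no twist bookkeeping for general $(p,q)$ is needed at all.
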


\begin{proof}
We start by recalling that ${}_p \cA_q$ is exact  by \cref{remexact}. 
	For $(p,q)=(1,0)$ and $(p,q)=(0,1)$, the statement follows directly from \cref{exregularmodule}. 
	
	For $p=q=1$, we have using \eqref{eq:iso_2.15}  that
	 $\HOM(X,Y) \cong (Y \boxtimes \mathbbm{1}) \otimes \Delta \otimes (X^\vee \boxtimes \mathbbm{1})$, so using the self-duality of $\Delta$ given in \eqref{eqnpsi}  we have natural isomorphisms $$ \HOM (X, \sS^r(Y)) \cong \HOM(Y,X)^\vee \cong (Y^{\vee \vee} \boxtimes \mathbbm{1}) \otimes \Delta^\vee  \otimes (X^\vee \boxtimes \mathbbm{1}) \cong \HOM(X, Y^{\vee \vee})\,    $$
	and hence an isomorphism $\gamma: \sS^r \cong (-)^{\vee \vee}$.  To see that this is an isomorphism as $\cA^\env$-module functors one can argue as follows: If $W \in \cA^\env$, the commutativity of the diagram
$$
\begin{tikzcd}
\sS^r (W \ogreaterthan X) \rar \dar{\gamma_{W \ogreaterthan X}} & W^{\vee \vee} \ogreaterthan \sS^r(X) \dar{ \id_{W^
{\vee \vee}} \ogreaterthan \gamma_X }   \\
(W \ogreaterthan X)^{\vee\vee } \rar  &  W^
{\vee \vee} \ogreaterthan X^
{\vee \vee}
\end{tikzcd}
$$
amounts, by the Yoneda lemma, to the commutativity of the corresponding diagram obtained by applying $\HOM(Y,-)$. The latter can be seen to commute thanks to the compatibility between the left and right module structure of the internal hom as in \cite[Lemma 2.10]{schaumannpiv}.

	For general $p,q\ge 1$ we argue as follows: The distinguished invertible object of $\cA^{\boxtimes p}\boxtimes \left( \cA^{\rev} \right)^{\boxtimes q}$ is according to  \eqref{eq:disting_inv_for_Deligne_product} $ \alpha_{\cA^{\boxtimes p}\boxtimes \left( \cA^{\rev} \right)^{\boxtimes q}} \cong \alpha^{\boxtimes p+q} ,$ 
	which implies that $\cA^{\boxtimes p}\boxtimes \left( \cA^{\rev} \right)^{\boxtimes q}$ is unimodular as well with a trivialization induced by that of $\cA$. Moreover the pivotal structure of $\cA$ induces a canonical pivotal structure on $\cA^{\boxtimes p}\boxtimes \left( \cA^{\rev} \right)^{\boxtimes q}$. Therefore by \cref{lemmapivstr} it  suffices to obtain a trivialization of the right Nakayama functor of ${}_p \cA_q$ as $\cA^{\boxtimes p}\boxtimes \left( \cA^{\rev} \right)^{\boxtimes q}$-module functor. This follows directly from Lemma~\ref{lemmapullback} above applied to the functor $\varphi= \otimes^p \boxtimes \otimes^q : \cA^{\boxtimes p}\boxtimes \left( \cA^{\rev} \right)^{\boxtimes q} \to \cA \boxtimes \cA^\rev = \cA^\env$  (which is monoidal as $\cA$ is braided, see~\cite[Section~3.3]{bzbj}), $\cM={}_1 \cA_1$,  $F= \N^r$ the right Nakayama functor of $\cA$ and $\chi$ the trivialization $\N^r \cong \sS^r \cong \id_\cA$ resulting from $\alpha \cong \mathbbm{1}$ and the pivotal module structure for ${}_1 \cA_1$.
\end{proof}

\subsection{Gluing pivotal structures}

Any surface with boundary arises from gluing bands to a certain disjoint union of disks or, equivalently, from gluing disks along marked boundary intervals. The next step towards proving \cref{thmmain} is to show that the pivotal module structure in the factorization homology of  a marked disk discussed in the preceding section induces another such in the factorization homology of the gluing via the excision equivalence \eqref{eq:equiv_excision_one_handle}.

The core of the argument is based on a careful analysis of how a pivotal module structure in a certain bimodule category $\cM$ descends to the balanced Deligne product, which is of independent interest.

\begin{lemma}\label{lemma:projs_in_AxAeM}
	Let $\cA$ be a finite tensor category and let $\cM$ be a finite $\cA$-bimodule category.
	The projective objects of the balanced Deligne product $\cA \boxtimes_{\cA^\env} \cM$ are direct summands of objects of the form $\mathbbm{1} \boxtimes_{\cA^\env} P$ for $P \in \cM$ projective.\label{lemmacy1}
\end{lemma}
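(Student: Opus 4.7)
The plan is to combine \cref{cor:projs_in_balanced_Deligne0} with the balancing relations of the Deligne product. My overall strategy is to start from the description of projectives in $\cA \boxtimes_{\cA^\env} \cM$ produced by that corollary (which lets one of the two factors be arbitrary and forces only the other to be projective), and then absorb everything living in the $\cA$-factor into $\cM$ by balancing.

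Concretely, I would first view $\cA$ as a right $\cA^\env$-module with respect to the natural right action $Z \olessthan (X \boxtimes Y) := Y \otimes Z \otimes X$; one checks directly against the monoidal product on $\cA^\env = \cA \boxtimes \cA^\rev$ that this is a right action. Applying the second clause of \cref{cor:projs_in_balanced_Deligne0} (the one about projectivity in the right variable) to the pair $(\cA,\cM)$ then implies that every projective $T \in \cA \boxtimes_{\cA^\env} \cM$ is a direct summand of some object of the form $X \boxtimes_{\cA^\env} N$ with $X \in \cA$ arbitrary and $N \in \cM$ projective.

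Next, since $\mathbbm{1} \olessthan (X \boxtimes \mathbbm{1}) \cong X$ by construction of the right action, the $\cA^\env$-balancing isomorphism of the Deligne product yields a natural identification
\begin{equation*}
X \boxtimes_{\cA^\env} N \;\cong\; \mathbbm{1} \boxtimes_{\cA^\env} \bigl( (X \boxtimes \mathbbm{1}) \ogreaterthan N \bigr),
\end{equation*}
where on the right-hand side $(X \boxtimes \mathbbm{1}) \ogreaterthan N$ means $X$ acting on $N$ from the left through the $\cA$-bimodule structure of $\cM$. Setting $P := (X \boxtimes \mathbbm{1}) \ogreaterthan N$, it remains to verify that $P$ is projective. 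For this I would invoke the bimodule analogue of \eqref{eq:adjunctions_action_functor}: the endofunctor $X \ogreaterthan (-): \cM \to \cM$ has $X^\vee \ogreaterthan (-)$ as right adjoint, which is itself a left adjoint to ${}^\vee X \ogreaterthan (-)$ and hence exact; consequently $X \ogreaterthan (-)$ preserves projective objects. Combining the three steps, $T$ appears as a direct summand of $\mathbbm{1} \boxtimes_{\cA^\env} P$ with $P$ projective in $\cM$, as claimed.

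I do not anticipate a real obstacle: the statement is a formal consequence of \cref{cor:projs_in_balanced_Deligne0} together with rigidity of $\cA$. The only mildly delicate point is pinning down the right $\cA^\env$-module structure on $\cA$ so that the balancing identity in the second step is unambiguous; once that is fixed, the rest of the argument is essentially automatic.
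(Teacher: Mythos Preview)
Your argument is correct and matches the paper's proof essentially line for line: invoke the second clause of \cref{cor:projs_in_balanced_Deligne0} to write a projective as a summand of $X\boxtimes_{\cA^\env} P$ with $P$ projective, then balance $X$ across to get $\mathbbm{1}\boxtimes_{\cA^\env}(X\ogreaterthan P)$, and note that the action of $X$ preserves projectives. The paper compresses the last two steps into one sentence, whereas you spell out the right $\cA^\env$-action on $\cA$ and the adjunction reason for projectivity preservation, but there is no substantive difference.
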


\begin{proof}
	By Corollary~\ref{cor:projs_in_balanced_Deligne0} the projective objects 
	of
	$\cA \boxtimes_{\cA^\env} \cM$ are direct summands of objects of the form
	$X \boxtimes P$ with $X\in\cA$ and $P\in\Proj \cat{M}$.
	But in $\cA \boxtimes_{\cA^\env} \cM$ such an object is isomorphic to $\mathbbm{1}\boxtimes (X\ogreaterthan P)$, where $X\ogreaterthan P$ is again projective. 
\end{proof}

If $\cA$ and $\cat{B}$
	are  finite tensor categories  
	 and  $\cM$ is a finite left $\cA^\env \boxtimes \cB$-module category,  note that $\cM$ is in particular a left $\cA^\env$- and $\cB$-module category with
\begin{equation*}
X \ogreaterthan M := (X \boxtimes \mathbbm{1}_\cB)\ogreaterthan M \qquad , \qquad Y \ogreaterthan M:=(\mathbbm{1}_{\cA^\env} \boxtimes Y) \ogreaterthan M 
\end{equation*}
for $X \in \cA^\env$, $Y \in \cB$ and $M  \in M$. Observe that in this case, the balanced Deligne product $\cA \boxtimes_{\cA^\env} \cM$ inherits the structure of finite left $\cB$-module category via $B \ogreaterthan (A \boxtimes_{\cA^\env} M) := A \boxtimes_{\cA^\env} (B \ogreaterthan M)$.

\begin{proposition}\label{propgluing}
	Let $\cA$ and $\cat{B}$
	be   pivotal finite tensor categories,  
	 and let $\cM$ be a finite left $\cA^\env\boxtimes \cB$-module category.
\begin{pnum}
\item There is a natural isomorphism 
$$
\Hom_{\cA	\boxtimes_{\cA^\env} \cM}( \mathbbm{1} \boxtimes_{\cA^\env} M,\mathbbm{1} \boxtimes_{\cA^\env} M'  ) \cong \Hom_{\cM} (    M, \Delta    \ogreaterthan   M') \ . \label{gluing1.1}
$$ 
	 \item If $\HOM_{ \cM}$ denotes the internal hom with respect to the $\cB$-module structure of $\cM$, then there is a natural isomorphism
$$
\HOM_{\cA	\boxtimes_{\cA^\env} \cM}( \mathbbm{1} \boxtimes_{\cA^\env} M,\mathbbm{1} \boxtimes_{\cA^\env} M'  )  \cong \HOM_{\cM} (    M, \Delta    \ogreaterthan   M') \ . \label{gluing1.2}
$$
 \item The right Nakayama functor $\nakar$ of $\cA \boxtimes_{\cA^\env} \cM$ is determined, up to natural isomorphism, by the restriction of the right Nakayama functor  of $\cM$ to $\Proj \cM$. More precisely, we have natural isomorphisms 
$$
 	\nakar (\mathbbm{1}\boxtimes_{\cA^\env} P)\cong \mathbbm{1}\boxtimes_{\cA^\env}  (\alpha \boxtimes \mathbbm{1}) \ogreaterthan \nakar_{\cM} P 
$$
 for all projective objects $P\in \cM$, where $\alpha \in \cA$ denotes the distinguished invertible object . \label{gluing2}
 \end{pnum}
	\end{proposition}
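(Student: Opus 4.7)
The plan is to prove (i) directly from the balanced-Deligne-product hom formula \eqref{eq:iso_DSS}, and then to bootstrap (ii) and (iii) from (i). For (i), I would specialise \eqref{eq:iso_DSS} to $\cat{A}$ as a right $\cat{A}^{\env}$-module and $\cat{M}$ as a left $\cat{A}^{\env}$-module. Since $\HOM_{\cat{A}}(\mathbbm{1},\mathbbm{1})=\Delta$, this produces a natural isomorphism
$$\Hom_{\cA\boxtimes_{\cA^\env}\cM}(\mathbbm{1}\boxtimes M,\mathbbm{1}\boxtimes M')\cong\Hom_{\cA^\env}(\mathbbm{1}_{\cA^\env},\Delta\otimes\HOM_{\cM}(M,M')).$$
The $\cat{A}^\env$-bimodule structure of the internal hom \eqref{eq:iso_2.15} rewrites $\Delta\otimes\HOM_{\cM}(M,M')$ as $\HOM_{\cM}(M,\Delta\ogreaterthan M')$, and the defining adjunction \eqref{eq:internal_hom_adj} delivers the claim.

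For (ii), I would apply Yoneda over $\cat{B}$: since by construction $Y\ogreaterthan(\mathbbm{1}\boxtimes M)=\mathbbm{1}\boxtimes(Y\ogreaterthan M)$, part (i) with $Y\ogreaterthan M$ in place of $M$ gives
$$\Hom_{\cB}(Y,\HOM_{\cA\boxtimes_{\cA^\env}\cM}(\mathbbm{1}\boxtimes M,\mathbbm{1}\boxtimes M'))\cong\Hom_{\cB}(Y,\HOM_{\cM}(M,\Delta\ogreaterthan M'))$$
naturally in $Y\in\cat{B}$, and Yoneda in $\cat{B}$ yields the desired isomorphism of $\cat{B}$-internal homs.

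For (iii), the plan is to characterise $\nakar(\mathbbm{1}\boxtimes P)$ by chaining the twisted Calabi--Yau formula \eqref{eq:Nr-twisted_CY} in $\cat{A}\boxtimes_{\cA^\env}\cat{M}$ and in $\cM$, bridged by (i). For $P,M\in\Proj\cM$, \cref{lemma:projs_in_AxAeM} ensures $\mathbbm{1}\boxtimes P$ and $\mathbbm{1}\boxtimes M$ are projective in $\cat{A}\boxtimes_{\cA^\env}\cat{M}$, and $\Delta\ogreaterthan M$ is projective in $\cM$ because the $\cat{A}^\env$-action on $\cM$ is exact. The computation reads
\begin{align*}
\Hom(\mathbbm{1}\boxtimes M,\nakar(\mathbbm{1}\boxtimes P))&\cong\Hom(\mathbbm{1}\boxtimes P,\mathbbm{1}\boxtimes M)^*\cong\Hom_{\cM}(P,\Delta\ogreaterthan M)^*\\
&\cong\Hom_{\cM}(\Delta\ogreaterthan M,\nakar_{\cM}P)\cong\Hom_{\cM}(M,\Delta^\vee\ogreaterthan\nakar_{\cM}P)\\
&\cong\Hom_{\cM}(M,\Delta\ogreaterthan((\alpha\boxtimes\mathbbm{1})\ogreaterthan\nakar_{\cM}P))\\
&\cong\Hom(\mathbbm{1}\boxtimes M,\mathbbm{1}\boxtimes((\alpha\boxtimes\mathbbm{1})\ogreaterthan\nakar_{\cM}P)),
\end{align*}
where the third line first replaces $\Delta^\vee$ by $(\alpha\boxtimes\mathbbm{1})\otimes\Delta$ using the defining relation of $\alpha$, and then commutes $(\alpha\boxtimes\mathbbm{1})$ past $\Delta$ using the centrality of $\alpha\in\cat{A}$. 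The latter follows from pivotality: $(-)^{\vee\vee\vee\vee}\cong\id$ combined with Radford's formula yields a natural isomorphism $\alpha\otimes X\cong X\otimes\alpha$ in $\cat{A}$. Because the objects $\mathbbm{1}\boxtimes M$ with $M\in\Proj\cM$ exhaust the projectives of $\cat{A}\boxtimes_{\cA^\env}\cat{M}$ up to direct summands, a Yoneda argument identifies $\nakar(\mathbbm{1}\boxtimes P)$ with $\mathbbm{1}\boxtimes((\alpha\boxtimes\mathbbm{1})\ogreaterthan\nakar_{\cM}P)$; the first assertion of (iii) then follows because a right exact endofunctor on a finite category is determined by its values on projectives. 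The main obstacle is to verify that every step in the chain is natural in both $M$ and $P$ and assembles into a genuine isomorphism of functors, the most delicate point being the centrality of $\alpha$, where the pivotal structure on $\cat{A}$ enters crucially.
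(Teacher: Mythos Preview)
Your proof is correct and follows essentially the same route as the paper: (i) via \eqref{eq:iso_DSS} combined with \eqref{eq:iso_2.15} and \eqref{eq:internal_hom_adj}; (ii) by Yoneda over $\cat{B}$; and (iii) by chaining the twisted Calabi--Yau isomorphisms \eqref{eq:Nr-twisted_CY} with $\Delta^\vee\cong(\alpha\boxtimes\mathbbm{1})\otimes\Delta\cong\Delta\otimes(\alpha\boxtimes\mathbbm{1})$ (the paper cites the half-braiding of $\alpha$ from \cite{mwcenter} for the last step, which is exactly the Radford-induced centrality you invoke). One small correction: \cref{lemma:projs_in_AxAeM} only says that projectives are direct summands of objects $\mathbbm{1}\boxtimes_{\cA^\env} P$, not that such objects are themselves projective; the latter (which the paper also uses implicitly) follows from the bimodule description \eqref{eq:equiv_balanced_Deligne}, under which $\mathbbm{1}\boxtimes_{\cA^\env} P$ corresponds to a free left $\Delta$-module on a projective object.
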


\begin{proof}
For \ref{gluing1.1} we first we observe with the definition $\Delta :=\END(\mathbbm{1}) \in \cA^\env$ that
	\begin{align*}\Hom_{\cA	\boxtimes_{\cA^\env} \cM}( \mathbbm{1} \boxtimes_{\cA^\env} M,\mathbbm{1} \boxtimes_{\cA^\env} M'  ) &\cong 
		\Hom_{\cA^\env} (    \mathbbm{1}\boxtimes\mathbbm{1} ,\Delta \otimes  \HOM(M, M') )\\
&\cong \Hom_{\cA^\env} (    \mathbbm{1}\boxtimes\mathbbm{1} , \HOM(M,\Delta \ogreaterthan M') )\\
&\cong \Hom_{\cM} (   M , \Delta \ogreaterthan M')
		 \ , 
\end{align*}
where in the first isomorphism we have used \eqref{eq:iso_DSS}, the second one is induced by \eqref{eq:iso_2.15} and the third one is \eqref{eq:internal_hom_adj}.

Next for \ref{gluing1.2}, we have for every $B\in\cB$ that
\begin{align*}
	\Hom_{\cB}(B,\HOM_ {\cA	\boxtimes_{\cA^\env} \cM}(\mathbbm{1}\boxtimes_{\cA^\env} M , \mathbbm{1}\boxtimes_{\cA^\env} M')   )&\cong \Hom_{\cA	\boxtimes_{\cA^\env} \cM}
	(\mathbbm{1}\boxtimes_{\cA^\env} B\ogreaterthan M,\mathbbm{1}\boxtimes_{\cA^\env} M')\\ &\cong \Hom_{\cM}(B \ogreaterthan M,\Delta \ogreaterthan M') \\&\cong \Hom_{\cB}(B,\HOM_{\cM} (    M, \Delta    \ogreaterthan   M')) \ ,
	\end{align*}
where the first isomorphism is \eqref{eq:internal_hom_adj}, the second is \ref{gluing1.1} and the third is again \eqref{eq:internal_hom_adj}.

Lastly for \ref{gluing2}, we have the following chain of isomorphisms, that we explain below:
	\begin{align*} \Hom_{\cA	\boxtimes_{\cA^\env} \cM}( \mathbbm{1} \boxtimes_{\cA^\env} P,\mathbbm{1} \boxtimes_{\cA^\env} Q  ) &\cong \Hom_{\cM} (P, \Delta \ogreaterthan Q)\\
	&\cong  \Hom_{\cM} (\Delta \ogreaterthan Q, \nakar_{\cM}P)^*\\
		&\cong  \Hom_{\cM} ( Q, \Delta^\vee \ogreaterthan\nakar_{\cM}P)^* \\
&\cong  \Hom_{\cM} ( Q, \Delta \ogreaterthan (\alpha \boxtimes \mathbbm{1}) \ogreaterthan\nakar_{\cM}P)^* \\
&\cong \Hom_{\cA	\boxtimes_{\cA^\env} \cM}( \mathbbm{1} \boxtimes_{\cA^\env} Q,\mathbbm{1} \boxtimes_{\cA^\env}  (\alpha \boxtimes \mathbbm{1}) \ogreaterthan \nakar_{\cM} P  )^* 
\end{align*} 
The first isomorphism is \ref{gluing1.1}, the second is \eqref{eq:Nr-twisted_CY}, the third is \eqref{eq:adjunctions_action_functor} (here we identify ${}^\vee \Delta \cong \Delta^\vee$ thanks to the pivotal structure), the fourth is induced by $\Delta^\vee \cong (\alpha \boxtimes \mathbbm{1}) \otimes \Delta\cong \Delta \otimes (\alpha \boxtimes \mathbbm{1})$ via the half braiding of $\alpha$ from \cite[Lemma~2.1]{mwcenter}, and the fifth is again \ref{gluing1.1}. We conclude by \eqref{eq:Nr-twisted_CY} and the Yoneda lemma (the fact that the Nakayama functor
of	 $\cA \boxtimes_{\cA^\env} \cM$
	 is determined by  this follows from Lemma~\ref{lemma:projs_in_AxAeM}).
	\end{proof}

\begin{proposition}\label{propgluing2}
	Let $\cA$ and $\cat{B}$
	be unimodular  pivotal finite tensor categories
	with fixed trivializations of their respective distinguished invertible objects,  
	and let $\cM$ be an exact left $\cA^\env\boxtimes \cB$-module category such that $\cM$ is pivotal  viewed  as a $\cB$-module category.
	Then
\begin{pnum}
\item The right Nakayama functor of  $\cA \boxtimes_{\cA^\env} \cM$ is trivializable.\label{propgluing2(i)}
\item  $\cA \boxtimes_{\cA^\env} \cM$ is an exact $\cB$-module category.\label{propgluing2(ii)}
\end{pnum}
	\end{proposition}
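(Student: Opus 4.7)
The plan is to first unpack the hypotheses via Lemma~\ref{lemmapivstr} applied to $\cM$ regarded as a $\cB$-module: since $\cB$ is unimodular pivotal with fixed trivialization of its distinguished invertible object and $\cM$ is pivotal (hence exact) over $\cB$, this yields a trivialization $\nakar_\cM\cong \id_\cM$ as $\cB$-module functor, and in particular as underlying functor on $\cM$.

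For part~\ref{propgluing2(i)}, I would invoke part~\ref{gluing2} of Proposition~\ref{propgluing} to compute the Nakayama functor on the family of projective generators $\mathbbm{1}\boxtimes_{\cA^\env} P$, $P\in \Proj \cM$, furnished by Lemma~\ref{lemma:projs_in_AxAeM}: the formula
\begin{align*}
\nakar\!\left(\mathbbm{1}\boxtimes_{\cA^\env} P\right)\;\cong\; \mathbbm{1}\boxtimes_{\cA^\env} \left((\alpha\boxtimes \mathbbm{1})\ogreaterthan \nakar_\cM P\right)\;\cong\; \mathbbm{1}\boxtimes_{\cA^\env} P
\end{align*}
combines the trivialization of $\alpha$ (from unimodularity of $\cA$) with the trivialization $\nakar_\cM\cong \id$. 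Since both $\nakar$ and $\id$ are right exact, a natural isomorphism on a projective generating family extends uniquely to all of $\cA\boxtimes_{\cA^\env}\cM$ via the standard projective-presentation argument, yielding~\ref{propgluing2(i)}.

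For part~\ref{propgluing2(ii)}, my plan is to use the monadicity equivalence~\eqref{eq:monadicity_thm} applied to the $\cA^\env$-progenerator $\mathbbm{1}\in\cA$ to identify $\cA\boxtimes_{\cA^\env}\cM\simeq \Delta\text{-}\catf{mod}(\cM)$, where $\Delta$ acts on $\cM$ via the $\cA^\env$-action and the $\cB$-module structure is inherited from $\cM$. The key algebraic input is that the unimodularity and pivotality of $\cA$ endow $\Delta$ with a symmetric Frobenius algebra structure in $\cA^\env$: this follows from \cite[Theorem~3.15]{relserre} applied to $\cA$ viewed as a pivotal $\cA^\env$-module, together with the self-duality $\psi$ of~\eqref{eqnpsi}. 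A standard consequence of the Frobenius property is that the free-forgetful adjunction $\cM\rightleftarrows \Delta\text{-}\catf{mod}(\cM)$ is ambidextrous, so that an object $(N,\rho)\in \Delta\text{-}\catf{mod}(\cM)$ is projective precisely when its underlying object $N\in \cM$ is projective.

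Granted this projectivity detection, the $\cB$-exactness follows quickly: for $Q\in\Proj \cB$ and $(N,\rho)\in \Delta\text{-}\catf{mod}(\cM)$, the commutativity of the $\cB$- and $\cA^\env$-actions on $\cM$ (a consequence of the product structure $\cA^\env\boxtimes \cB$) gives $Q\ogreaterthan(N,\rho)=(Q\ogreaterthan N,\, Q\ogreaterthan\rho)$, and the underlying object $Q\ogreaterthan N$ is projective in $\cM$ by the pivotal (hence exact) $\cB$-module structure on $\cM$. Hence $Q\ogreaterthan(N,\rho)$ is projective in $\cA\boxtimes_{\cA^\env}\cM$. The main obstacle I anticipate is pinning down the symmetric Frobenius structure on $\Delta$ together with the ambidextrous free-forgetful adjunction in the required generality; once these standard ingredients are in hand, the proof of both parts unwinds directly from Proposition~\ref{propgluing}.
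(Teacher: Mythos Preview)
Your argument for part~\ref{propgluing2(i)} is essentially the paper's: both combine Lemma~\ref{lemmapivstr}, the trivialization $\alpha\cong\mathbbm{1}$, and Proposition~\ref{propgluing}\ref{gluing2} on the projective generators of Lemma~\ref{lemma:projs_in_AxAeM}.

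For part~\ref{propgluing2(ii)} you take a genuinely different route. The paper never passes through the monadic description $\cA\boxtimes_{\cA^\env}\cM\simeq \Delta\text{-}\catf{mod}(\cM)$; instead it works directly with the $\cB$-valued internal hom. Using Proposition~\ref{propgluing}\ref{gluing1.2}, the pivotal $\cB$-module structure of $\cM$, and the self-duality $\Delta\cong\Delta^\vee$ (together with the elementary identity $\HOM_\cM(A\ogreaterthan M,M')\cong\HOM_\cM(M,A^\vee\ogreaterthan M')$), it exhibits a natural isomorphism
\[
\HOM_{\cA\boxtimes_{\cA^\env}\cM}(\mathbbm{1}\boxtimes M,\mathbbm{1}\boxtimes M')^\vee\;\cong\;\HOM_{\cA\boxtimes_{\cA^\env}\cM}(\mathbbm{1}\boxtimes M',\mathbbm{1}\boxtimes M),
\]
so the internal hom is exact and every object is $\cB$-projective. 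Your approach, by contrast, reduces projectivity detection to $\cM$ via the ambidextrous free--forgetful adjunction for the Frobenius algebra $\Delta$, and then invokes exactness of $\cM$ as a $\cB$-module. Both arguments ultimately hinge on the self-duality~\eqref{eqnpsi}; the paper's version stays within the internal-hom calculus already set up and directly produces the duality on internal homs (a step toward the pivotal structure proved just after), while yours is a clean module-theoretic argument. Note that you only need $\Delta$ to be Frobenius, not symmetric Frobenius, and this is immediate from~\eqref{eqnpsi} (the paper records the symmetric Frobenius structure via \cite[Theorem~3.14]{relserre} in Section~\ref{sectraces}), so the ``obstacle'' you anticipate is not one.
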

\begin{proof}
By \cref{lemmapivstr}, the right  Nakayama functor of $\cM$ comes with a ($\cB$-module) isomorphism
	$ \nakar_{\cM}\cong \id_{\cM}$. This trivialization and the fixed trivialization $\alpha \cong \mathbbm{1}$ of the distinguished invertible object of $\cA$ induce, by  \cref{propgluing}~\ref{gluing2},  a trivialization $\chi$ of the Nakayama functor of $\cA \boxtimes_{\cA^\env} \cM$. This proves \ref{propgluing2(i)}.

Let $A\in \cA^\env$, $B \in\cB$ and $M,M'\in\cM$.	Then for the $\cB$-valued internal homs $\HOM_\cM$ of $\cM$ we find
	\begin{align*}
		\Hom_{\cB}(B,\HOM_{\cM}(A\ogreaterthan M,M'))&\cong
		\Hom_{\cM} ((A \boxtimes B)\ogreaterthan M,M') \\ &\cong \Hom_{\cM}(      B\ogreaterthan M,A^\vee \ogreaterthan M') \\&\cong \Hom_{\cB}(B,\HOM_{\cM} (M,A^\vee \ogreaterthan M')) \ , 
		\end{align*}
	and therefore 
	\begin{equation}\label{eq:iso_Sch_bimod}
		\HOM_{\cM}(A\ogreaterthan M,M')\cong 
		\HOM_{\cM} (M,A^\vee \ogreaterthan M') \ , 
		\end{equation}
see also \cite[Lemma 2.18]{schaumannpiv}.	 
	Then we obtain the following chain of natural isomorphisms:
\begin{align*}
\HOM_{\cA	\boxtimes_{\cA^\env} \cM}( \mathbbm{1} \boxtimes_{\cA^\env} M,\mathbbm{1} \boxtimes_{\cA^\env} M'  )^\vee &\cong \HOM_{\cM} (    M, \Delta    \ogreaterthan   M')^\vee\\
&\cong \HOM_{\cM} (  \Delta    \ogreaterthan   M,  M'  ) \\
&\cong \HOM_{\cM} (    M', \Delta    \ogreaterthan   M) \\
&\cong \HOM_{\cA	\boxtimes_{\cA^\env} \cM}( \mathbbm{1} \boxtimes_{\cA^\env} M',\mathbbm{1} \boxtimes_{\cA^\env} M  ) \ .
\end{align*}
Here the first and last isomorphisms are \cref{propgluing}~\ref{gluing1.2}, the second isomorphism is induced by the pivotal module structure of $\cM$, and the third one is induced by combining \eqref{eq:iso_Sch_bimod} and the self-duality of $\Delta$ from \eqref{eqnpsi} 
because $\cA$ is unimodular. Since all objects in $\cA	\boxtimes_{\cA^\env} \cM$ 
are of the form $\mathbbm{1} \boxtimes_{\cA^\env} M$, this tells us that the internal hom is exact, as it is left exact and therefore its dual is right exact. Hence, all objects of    $\cA	\boxtimes_{\cA^\env} \cM$ are $\cB$-projective, that is,  $\cA	\boxtimes_{\cA^\env} \cM$ is an exact $\cB$-module category, which shows \ref{propgluing2(ii)} (or one argues via \cite[Proposition~7.9.7~(1)]{egno}).
	\end{proof}

Together with \eqref{eq:equiv_excision_one_handle}, this immediately implies

\begin{corollary}
Let $\cat{A}$ be a unimodular finite ribbon category,	and let $\Sigma$ be
	a compact, oriented surface  with $n$ marked boundary intervals, at least one per connected component.
	Then the factorization homology $\int_\Sigma \cat{A}$
	is an exact left
	 $\cat{A}^{\boxtimes n}$-module category.
\end{corollary}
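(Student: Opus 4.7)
The natural strategy is to induct on the topological complexity of $\Sigma$, working outward from a disjoint union of marked disks toward $\Sigma$ via the excision equivalence \eqref{eq:equiv_excision_one_handle}. Because $\Sigma$ has at least one marked interval per connected component, one can present it as an iterated gluing of marked disks along pairs of marked intervals, always arranging the sequence so that each intermediate surface still carries at least one marked interval per component.

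The base case is that of a disjoint union of marked disks. For a single disk with $m$ marked intervals, \cref{lem:disk_FH} identifies $\int_{D^2}\cA$ with ${}_m\cA_0$ as a pivotal, and hence exact, $\cA^{\boxtimes m}$-module category. A disjoint union is then handled by the exterior Deligne product of the individual disk factorization homologies, whose exactness follows from exactness of each factor via the description of projectives in a Deligne product.

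The inductive step is where \cref{propgluing2} does the work. Suppose $\int_\Sigma\cA$ has already been exhibited as a pivotal $\cA^{\boxtimes n}$-module category. Form $\Sigma'$ by gluing a chosen pair of marked intervals; excision gives
$$
\int_{\Sigma'}\cA \simeq \cA \boxtimes_{\cA^\env} \int_\Sigma \cA
$$
as $\cA^{\boxtimes n-2}$-module categories, where the two glued intervals contribute the $\cA^\env$-action and the remaining $n-2$ marked intervals furnish the residual action of $\cB := \cA^{\boxtimes n-2}$. The category $\cB$ is unimodular pivotal by \eqref{eq:disting_inv_for_Deligne_product}, with trivialization of its distinguished invertible object inherited from that of $\cA$. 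Applying \cref{propgluing2} to this configuration yields both a trivialization of the right Nakayama functor of $\cA \boxtimes_{\cA^\env} \int_\Sigma \cA$, equivalently a pivotal $\cB$-module structure by \cref{lemmapivstr}, and the desired exactness as a $\cB$-module category.

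The main obstacle is that the induction intertwines exactness with pivotality: one cannot propagate exactness alone, because \cref{propgluing2} requires a full pivotal $\cB$-module structure on $\cM$ as input in order to deduce exactness (and pivotality) of the glued module. One therefore has to carry both properties through the induction in tandem, which is precisely the content of \cref{thmmain}; the corollary is then the exactness half of its conclusion.
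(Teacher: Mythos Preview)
Your proposal is correct and follows the same strategy the paper has in mind: the paper's proof is the single line ``Together with \eqref{eq:equiv_excision_one_handle}, this immediately implies'', and your induction is a faithful unpacking of that sentence, carrying exactness and pivotality jointly through the excision equivalence by repeated application of \cref{propgluing2}, starting from the disk case \cref{lem:disk_FH}. Your final observation that the corollary is the exactness half of \cref{thmmain} is exactly right.
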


\begin{example}\label{exaenv}
	In Proposition~\ref{propgluing2} set $\cM=\cA^\env$ with the $ \cA^\env$-action being the regular one (the assumption on $\cA$ and $\cB$ remain in place). This tells us that $\cA$ and $\cA \boxtimes_{\cA^\env} \cA^\env$ comes with a trivialization of the respective right Nakayama functors. By explicitly calculating the latter we will show that these trivializations correspond to each other under the canonical equivalence
	\begin{equation} \label{eqnequiv}
	\cA \boxtimes_{\cA^\env} \cA^\env \ra{\simeq} \cA \qquad , \qquad A \boxtimes_{\cA^\env} X \mapsto X \ogreaterthan A \ . 
		\end{equation} Indeed, \eqref{eq:N=alpha.(-)**} and \eqref{eq:disting_inv_for_Deligne_product} give us $\nakar_{\cA^\env}\cong (\alpha^{-1}\boxtimes \alpha^{-1}) \otimes (-)^{\vee \vee}$
	and therefore with \cref{propgluing}~\ref{gluing2}
	\begin{equation} \label{eq:N(1xP)=1x...}
	 \nakar (\mathbbm{1}\boxtimes_{\cA^\env} P)\cong \mathbbm{1}\boxtimes_{\cA^\env}  ( \mathbbm{1}\boxtimes \alpha ^{-1}) \otimes  P^{\vee \vee}
\end{equation}
 for $P\in \Proj \cA^\env$. 
  Since~\eqref{eqnequiv} is induced by the monoidal product, this means that the previous isomorphism \eqref{eq:N(1xP)=1x...} corresponds, under the canonical equivalence, to the natural isomorphism \eqref{eq:N=alpha.(-)**}. The statement follows since the trivialization of the latter as well as the trivialization of 	\eqref{eq:N(1xP)=1x...} are both induced by the fixed isomorphism $\alpha \cong \mathbbm{1}$ and the pivotal structure of $\cA$.
	\end{example}

\subsection{Trace functions\label{sectraces}}
Recall from \eqref{eq:Nr-twisted_CY} that any finite category comes with a \emph{right $\nakar$-twisted Calabi-Yau structure} in the sense that  there are natural isomorphisms
\begin{equation*}
\Hom_{\cM} (P,Q) \cong \Hom_{\cM}(Q, \N^r P)^*  \ ,
\end{equation*}
so a trivialization of $\N^r$ as an ordinary functor gives us 
 non-degenerate traces
$$ \tr^{\!\cM}_P : \Hom_{\cM}(P,P)\to k$$
that are cyclic and hence
descend to $H \! H_0(\cM) := \int^{X\in \Proj \cM} \mathsf{End}_\cM(X)$  and assemble into a map
$$
\tr^{\!\cM} : H\! H_0(\cat{M})\to k \ . 
$$ For $P\in \Proj\cM$ and $g:P\to P$, we denote the trace  by
$$
\tr^{\!\cM} \left(	\begin{array}{c}\tikzfig{tracepicture0}\end{array}\right)
$$
 in the graphical calculus to be read from bottom to top.

Now suppose that $\cM$ is actually a module over a pivotal finite tensor category $\cA$.
By \cite[Section~5.1]{shibatashimizu}, the traces corresponding to trivializations of $\N^r$ as $\cA$-module functor  are exactly
the ones that satisfy the \emph{partial trace property}
\begin{equation}
\tr(f)=\tr (\close_{X|P}(f)) \ , \label{eqnptp}
\end{equation}
for all $X\in\cA$, $P\in\Proj \cM$ and $f: X \ogreaterthan P \to  X \ogreaterthan P$ 
for the  closing operator
$$ \close_{X|P}:\Hom_{\cM}(X \ogreaterthan P,X \ogreaterthan P )\cong \Hom_{\cA}           (P,(X^\vee \otimes X )\ogreaterthan P) \ra{ b_X  }   
\Hom_{\cM}( P, P ) \  
$$

\begin{proposition}
Let $\cA$ and $\cat{B}$
	be unimodular  pivotal finite tensor categories
	with fixed trivializations of their respective distinguished invertible objects,  
	and let $\cM$ be an exact left $\cA^\env\boxtimes \cB$-module category such that $\cM$ is pivotal  viewed  as a $\cB$-module category. Then the trivialization of the right Nakayama functor of $\cA \boxtimes_{\cA^\env} \cM$ given in   \cref{propgluing2}~\ref{propgluing2(i)} is in fact a trivialization as $\cB$-module functor, that is,  $\cA \boxtimes_{\cA^\env} \cM$ inherits a pivotal module structure from that of $\cM$.
\end{proposition}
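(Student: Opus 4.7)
The plan is to leverage the explicit description of the trivialization supplied by \cref{propgluing}~\ref{gluing2} together with the fact that the $\cA^\env$-action and the $\cB$-action on $\cM$ commute, since they come from a single $\cA^\env\boxtimes\cB$-action.

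First I would unpack, from the proof of \cref{propgluing2}~\ref{propgluing2(i)}, what the trivialization $\chi:\nakar_{\cA\boxtimes_{\cA^\env}\cM}\cong\id$ looks like on the generating objects. For every projective $P\in\cM$, \cref{propgluing}~\ref{gluing2} provides
$$\nakar_{\cA\boxtimes_{\cA^\env}\cM}(\mathbbm{1}\boxtimes_{\cA^\env} P)\cong\mathbbm{1}\boxtimes_{\cA^\env}(\alpha\boxtimes\mathbbm{1})\ogreaterthan\nakar_\cM P,$$
and $\chi$ is obtained as the composite of this isomorphism with (i) the trivialization induced by the fixed $\alpha\cong\mathbbm{1}$ in $\cA$, which kills the factor $(\alpha\boxtimes\mathbbm{1})\ogreaterthan-$, and (ii) the $\cB$-module trivialization $\nakar_\cM\cong\id_\cM$ coming from the pivotal $\cB$-module structure on $\cM$ via \cref{lemmapivstr} (applied using the fixed trivialization $\alpha_\cB\cong\mathbbm{1}$).

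Next, I would verify that $\chi$ is $\cB$-equivariant. For $Y\in\cB$ and projective $P\in\cM$ the $\cB$-action on $\cA\boxtimes_{\cA^\env}\cM$ satisfies $Y\ogreaterthan(\mathbbm{1}\boxtimes_{\cA^\env}P)=\mathbbm{1}\boxtimes_{\cA^\env}(Y\ogreaterthan P)$, and since $\cM$ is $\cA^\env\boxtimes\cB$-linear there is a canonical interchange isomorphism $(\alpha\boxtimes\mathbbm{1})\ogreaterthan(Y\ogreaterthan N)\cong Y\ogreaterthan(\alpha\boxtimes\mathbbm{1})\ogreaterthan N$. Composing this with the ${}^{\vee\vee}(-)$-twisted $\cB$-module structure \eqref{eq:Nakayama_twisted} of $\nakar_\cM$, identified with an untwisted structure via the pivotal structure of $\cB$, and then invoking the naturality of the isomorphism in \cref{propgluing}~\ref{gluing2}, produces exactly the $\cB$-module structure map on $\nakar_{\cA\boxtimes_{\cA^\env}\cM}$.

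Finally, to conclude it suffices to check that each of the two pieces forming $\chi$ is compatible with this $\cB$-module structure. The factor coming from $\alpha\cong\mathbbm{1}$ only touches the $\cA^\env$-direction, and hence commutes with the $\cB$-action by the commutativity of the two commuting actions; thus this piece is automatically $\cB$-module compatible. The factor $\nakar_\cM\cong\id_\cM$ is a $\cB$-module isomorphism by hypothesis. Their composite is therefore a trivialization of $\nakar_{\cA\boxtimes_{\cA^\env}\cM}$ as a $\cB$-module functor, and by \cref{lemmapivstr} this yields the desired pivotal $\cB$-module structure on $\cA\boxtimes_{\cA^\env}\cM$, compatible with the one on $\cM$. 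The main technical obstacle will be bookkeeping: carefully tracking the interaction of the twisted module functor structure of $\nakar$, the pivotal identification on $\cB$, and the coherences of the isomorphism in \cref{propgluing}~\ref{gluing2}; but once the diagrams are laid out, every required compatibility reduces to the naturality already present in the preceding lemmas and to the axioms of an $\cA^\env\boxtimes\cB$-module.
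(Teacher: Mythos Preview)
Your approach is correct in outline but takes a genuinely different route from the paper. You argue directly that the trivialization $\chi$ is a $\cB$-module natural isomorphism by decomposing it into the piece coming from $\alpha\cong\mathbbm{1}$ (which touches only the $\cA^\env$-direction and hence commutes with the $\cB$-action) and the piece $\nakar_\cM\cong\id_\cM$ (which is $\cB$-linear by hypothesis), and then checking that the identification of \cref{propgluing}~\ref{gluing2} intertwines the intrinsic twisted $\cB$-module structure on $\nakar_{\cA\boxtimes_{\cA^\env}\cM}$ with the one on the right-hand side. The paper instead bypasses this diagram chase entirely: it invokes the Shibata--Shimizu criterion that a trivialization of $\nakar$ is a $\cB$-module trivialization if and only if the associated cyclic traces satisfy the partial trace property~\eqref{eqnptp}, then writes down the trace on $\cA\boxtimes_{\cA^\env}\cM$ explicitly as $\tr^{\!\cM}$ applied to $(\varepsilon\ogreaterthan\id)\circ\widetilde f$ (with $\varepsilon$ the Frobenius form of $\Delta$), and observes that the partial trace property for $\cB$ is inherited from $\tr^{\!\cM}$ because postcomposing with $\varepsilon$ does not interact with the $\cB$-action. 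Your approach is more elementary in that it does not need the trace calculus of \cref{sectraces}, but the price is the coherence bookkeeping you flag at the end: verifying that the chain of Yoneda-type isomorphisms in the proof of \cref{propgluing}~\ref{gluing2} is compatible with the $\cB$-module structures is real work, whereas the paper's trace argument reduces everything to a single graphical identity. One small omission: to invoke \cref{lemmapivstr} at the end you need $\cA\boxtimes_{\cA^\env}\cM$ to be exact as a $\cB$-module, which you should cite from \cref{propgluing2}~\ref{propgluing2(ii)}.
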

\begin{proof}
We start by noting that $\cA \boxtimes_{\cA^\env} \cM$ is exact by  \cref{propgluing2}~\ref{propgluing2(ii)}. By the discussion above, it suffices to show that the traces associated to the trivialization of \cref{propgluing2}~\ref{propgluing2(i)}
satisfy the partial trace property. By \cref{lemma:projs_in_AxAeM}, it is enough to verify this for endomorphisms
	 $f: \mathbbm{1} \boxtimes_{\cA^\env} P \to \mathbbm{1} \boxtimes_{\cA^\env} P$ with $P\in \Proj \cM$.
	 
	From the construction in \cref{propgluing} and the correspondence between traces and trivializations of the Nakayama functor in \cite{shibatashimizu,tracesw}, one obtains that
	\begin{align}
	\tr^{\!\cA \boxtimes_{\cA^\env} \cM }(f)=	\tr^{\!\cM}  \left(	\begin{array}{c}	\tikzfig{tracepicture}\end{array}\right) \ , 
	\end{align}
	where 
	\begin{itemize}
		\item $\widetilde f : P \to \Delta \ogreaterthan P$ is the image of $f$ under the isomorphism of \cref{propgluing}~\ref{gluing1.1},
		\item $\varepsilon : \Delta \to \mathbbm{1}\boxtimes\mathbbm{1}$ is the Frobenius form of $\Delta$ (which is a symmetric Frobenius algebra by \cite[Theorem 3.14]{relserre}).
	\end{itemize}
	In this description, it is clear that this trace has the partial trace property~\eqref{eqnptp}
	with respect to the remaining $\cB$-action, because it is by assumption already the case for $	\tr^{\!\cM}  $, and the application of $\varepsilon$ does not change that.
This finishes the proof.
	\end{proof}

\subsection{The main result}

Once we have proven how a pivotal module structure  induces another such on (a special kind of) the balanced Deligne product, we are in the position to prove our main result on the pivotal module structure of factorization homology.

\begin{theorem}\label{thmmain}
 
	\end{theorem}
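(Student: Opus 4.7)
The plan is to reduce to the disk case already established in Proposition \ref{lem:disk_FH} by iterated application of the gluing procedure of Proposition \ref{propgluing2}. Specifically, any compact oriented surface $\Sigma$ with at least one marked boundary interval per connected component can be obtained from a disjoint union of marked disks by a finite sequence of self-gluings along pairs of marked intervals: thicken a pair-of-pants decomposition of $\Sigma$ (relative to its marked intervals) so that the building blocks are disks and the pasting moves are interval self-gluings. I will induct on the number of such gluings.

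For the base case of a disjoint union $\Sigma_0 = \bigsqcup_i D_i^2$ of disks with marked intervals, factorization homology satisfies $\int_{\Sigma_0}\cA \simeq \boxtimes_i \int_{D_i^2}\cA$, and Proposition \ref{lem:disk_FH} gives a pivotal $\cA^{\boxtimes n_i}$-module structure on each factor. The external Deligne product of pivotal module categories is pivotal (the right relative Serre functor of a Deligne product is the Deligne product of the right relative Serre functors, and a trivialization is supplied factor-wise), so $\int_{\Sigma_0}\cA$ is pivotal over $\cA^{\boxtimes n}$ with $n = \sum_i n_i$.

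For the inductive step, suppose $\Sigma$ arises from $\Sigma'$ by gluing a pair of marked intervals $I,J$ together, and that $\int_{\Sigma'}\cA$ is already equipped with a pivotal $\cA^{\boxtimes n+2}$-module structure. Following \cref{subsec:review_FH}, the $\cA^{\boxtimes 2}$-action associated to $I$ and $J$ becomes an $\cA^\env$-action via the braiding-induced equivalence $\cA \simeq \cA^{\mathrm{rev}}$; by Lemma \ref{lemmapullback} the pivotal $\cA^{\boxtimes n+2}$-module structure on $\int_{\Sigma'}\cA$ restricts to a pivotal $\cA^\env\boxtimes \cA^{\boxtimes n}$-module structure with respect to the $\cA^{\boxtimes n}$-factor. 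The excision equivalence \eqref{eq:equiv_excision_one_handle} then gives
\[
\int_\Sigma \cA \;\simeq\; \cA \boxtimes_{\cA^\env} \int_{\Sigma'}\cA ,
\]
and Proposition \ref{propgluing2}\,\ref{propgluing2(i)}--\ref{propgluing2(ii)} applied with $\cat{B} = \cA^{\boxtimes n}$ and $\cM = \int_{\Sigma'}\cA$ shows that the right-hand side is an exact left $\cA^{\boxtimes n}$-module category whose right Nakayama functor is canonically trivialized; by \cref{lemmapivstr}, using that $\cA^{\boxtimes n}$ is unimodular and pivotal with the trivialization of its distinguished invertible object induced from that of $\cA$ via \eqref{eq:disting_inv_for_Deligne_product}, this trivialization is exactly a pivotal $\cA^{\boxtimes n}$-module structure on $\int_\Sigma \cA$.

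The main obstacle I anticipate is checking that the construction is insensitive to the order in which the gluings are performed, so that the pivotal structure on $\int_\Sigma \cA$ is well-defined (this is needed for the sequel, even if not explicit in the statement). The compatibility in the elementary case of a trivial gluing is precisely \cref{exaenv}, which shows that the pivotal structure produced by applying Proposition \ref{propgluing2} to $\cA \boxtimes_{\cA^\env} \cA^\env$ matches the one on $\cA$ from \cref{exregularmodule} under the canonical equivalence. A general change of gluing order can be bridged by a sequence of such elementary moves together with the obvious symmetry $\cA \boxtimes_{\cA^\env} \cM \simeq \cM \boxtimes_{\cA^\env} \cA$, so \cref{exaenv} suffices to propagate the coherence.
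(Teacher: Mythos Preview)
Your proposal is correct and follows essentially the same approach as the paper: reduce to disks via Proposition~\ref{lem:disk_FH}, then iterate Proposition~\ref{propgluing2} along excision, with Example~\ref{exaenv} handling well-definedness. The paper organizes this through a \emph{ribbon graph model} $R$ for $\Sigma$: the vertices give the building blocks ${}_p\cA_q$, the internal edges give the relative tensor products $\boxtimes_{\cA^\env}$, and any two ribbon graph models differ by a zigzag of edge contractions, each of which is precisely the elementary move $\cA\boxtimes_{\cA^\env}\cA^\env\simeq\cA$ analyzed in Example~\ref{exaenv}. This makes your independence-of-presentation argument (which you correctly flagged as the delicate point) completely precise, whereas your phrase ``a general change of gluing order can be bridged by a sequence of such elementary moves'' is exactly the statement that ribbon graph models are connected by edge contractions. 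Your inductive formulation is equivalent; the ribbon-graph language just supplies the combinatorial backbone you are implicitly using.
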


\begin{proof}
	Without loss of generality, we may assume that $\Sigma$ is connected. 
	If $\Sigma$ is a disk, the statement is true by	 \cref{lem:disk_FH}. In the general case, we choose a ribbon graph model for the surface, see e.g.~\cite{egaskupers} for the necessary background. 
A ribbon graph $R$ for $\Sigma$ is a finite graph such that the graph obtained by contracting all internal edges is a corolla whose $n$ legs are identified with the boundary intervals of $\Sigma$, with the datum of a cyclic orders for each of the vertices of $R$ and an identification $|R|\ra{\cong} \Sigma$ between the surface obtained by geometrically realizing $R$ and $\Sigma$.
More explicitly, the $m$ vertices of $R$ with their cyclic order are fattened into disks that are then glued along intervals as prescribed by $R$. 

The ribbon graph $R$ in combination with the excision property for factorization homology offers a way to obtain $\int_\Sigma \cA$ by taking a number of the modules ${}_p \cA_q$ from  \cref{defApq}, namely one for each of the vertices of $R$, and taking a relative tensor product $\boxtimes_{\cA^\env}$, namely for each gluing along an interval. This gives us an $\cat{A}^{\boxtimes n}$-module category $\cC(\Sigma;R)$ plus an equivalence $\varphi_R:\cC(\Sigma;R)\ra{\simeq}\int_\Sigma \cA$ of $\cA^{\boxtimes n}$-modules. By repeatedly applying \cref{propgluing2}, we obtain a pivotal $\cA^{\boxtimes n}$-module structure on $\cC(\Sigma;R)$.
This then makes $\int_\Sigma \cA$ also a pivotal $\cA^{\boxtimes n}$-module category via $\varphi_R$.

It remains to prove that this does not depend on the ribbon graph model $R$.
Suppose we have another ribbon graph model $R'$ for $\Sigma$,
then $R'$ arises from $R$ through a zigzag of tree contractions. Without loss of generality, we can assume that $R'$ arises from $R$ by the contraction of an edge attached to two different vertices. This gives us an equivalence $\gamma: \cC(\Sigma;R)\ra{\simeq} \cC(\Sigma;R')$, namely the one eliminating a relative tensor product $\boxtimes_{\cA^\env} \cA^{\env}$, that satisfies $\varphi_{R'}\circ\gamma=\varphi_R$.  It suffices to see that $\gamma$ preserves the trivialization of the right Nakayama functor on $\cC(\Sigma;R)$ and $\cC(\Sigma;R')$
that gives us the pivotal structure, but this was already observed in \cref{exaenv}.
\end{proof}

\begin{remark}
	If $\cA$ is modular and $\Sigma$ has exactly one interval per boundary component, then any handlebody $H$ for $\Sigma$ gives us an equivalence
	$\int_\Sigma \cA\simeq \cA^{\boxtimes n}$ as $\cA^{\boxtimes n}$-module categories, where the $\cA^{\boxtimes n}$-action on $\cA^{\boxtimes n}$ is the regular one~\cite[Theorem 6.5]{reflection}.
	In that case, \cref{thmmain} is clear.
	\end{remark}

\begin{corollary}[Modular invariance]\label{corinvariance}
Under the hypotheses of \cref{thmmain}, the group of diffeomorphisms of $\Sigma$ preserving the orientation and the parametrized intervals acts on $\int_{\Sigma}\cA$ by $\cA^{\boxtimes n}$-module automorphisms preserving the pivotal module  structure.
\end{corollary}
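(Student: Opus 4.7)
The plan is to reduce the statement to the independence on the ribbon graph model that was established inside the proof of \cref{thmmain}.

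First, by the functoriality of factorization homology under orientation-preserving embeddings compatible with the parametrized intervals, every diffeomorphism $f:\Sigma\to\Sigma$ preserving the orientation and the parametrized intervals induces an $\cA^{\boxtimes n}$-module autoequivalence $f_\ast:\int_\Sigma \cA \xrightarrow{\simeq} \int_\Sigma \cA$; by homotopy invariance this descends to the mapping class group. The remaining content is therefore the compatibility with the pivotal module structure.

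To establish this, I would fix a ribbon graph model $R$ for $\Sigma$ and recall from the proof of \cref{thmmain} the associated $\cA^{\boxtimes n}$-module equivalence $\varphi_R:\cC(\Sigma;R)\xrightarrow{\simeq}\int_\Sigma \cA$ together with the pivotal structure on $\cC(\Sigma;R)$ obtained via iterated application of \cref{propgluing2}. The diffeomorphism $f$ transports $R$ to another ribbon graph model $f(R)$ for $\Sigma$. Since the assignment $R\mapsto \cC(\Sigma;R)$ is purely combinatorial, built by gluing copies of ${}_p\cA_q$ along $\boxtimes_{\cA^\env}$ according to the edges of $R$, one obtains a tautological pivotal $\cA^{\boxtimes n}$-module equivalence $\iota:\cC(\Sigma;R)\xrightarrow{\simeq}\cC(\Sigma;f(R))$ that merely relabels the vertices and edges. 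The square
\begin{equation*}
\begin{tikzcd}
\cC(\Sigma;R) \arrow[r, "\varphi_R"] \arrow[d, "\iota"'] & \int_\Sigma \cA \arrow[d, "f_\ast"] \\
\cC(\Sigma;f(R)) \arrow[r, "\varphi_{f(R)}"] & \int_\Sigma \cA
\end{tikzcd}
\end{equation*}
should then commute up to canonical natural isomorphism, since both $\varphi_R$ and $\varphi_{f(R)}$ are assembled from the excision equivalences \eqref{eq:excision} and $f$ by definition carries the gluing data defining $\varphi_R$ to the one defining $\varphi_{f(R)}$.

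Combining this commutative square with the model-independence established in \cref{thmmain} --- namely that the pivotal $\cA^{\boxtimes n}$-module structures on $\int_\Sigma \cA$ induced via $\varphi_R$ and via $\varphi_{f(R)}$ coincide --- one concludes that $f_\ast$ preserves the pivotal structure. I expect the main obstacle to be essentially packaged inside the proof of \cref{thmmain}, namely the model-independence of the pivotal structure; granting that, the present corollary is essentially a naturality statement reflecting that a diffeomorphism preserving the parametrized intervals just transports one valid ribbon graph model to another.
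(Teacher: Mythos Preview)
Your proposal is correct and follows essentially the same argument as the paper: both reduce the claim to the ribbon graph model independence established in the proof of \cref{thmmain} by observing that $f_\ast$ carries the pivotal structure coming from one ribbon graph model to the one coming from the model obtained by transporting along $f$. The only cosmetic difference is that the paper keeps the combinatorial ribbon graph $R$ fixed and changes only the identification $|R|\cong\Sigma$ by postcomposing with $f$ (so your $\iota$ is literally the identity and no separate $f(R)$ is needed), but this is the same idea.
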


\begin{proof}
	If $f:\Sigma \to \Sigma$ is an diffeomorphism preserving the orientation and the boundary parametrization, the induced map $f_* : \int_\Sigma \cA \ra{\simeq} \int_{\Sigma}\cA$ comes canonically with an $\cA^{\boxtimes n}$-module structure, see e.g.~the comments in \cite[Remark 4.5]{brochierwoike}. Moreover, $f_*$ sends the pivotal structure on $\int_\Sigma \cA$ inherited from some ribbon graph model $\cC(\Sigma;R)\ra{\simeq} \int_\Sigma \cA$ to the one inherited from the ribbon graph model for $\Sigma$ that has the same ribbon graph but whose identification $|R|\ra{\cong} \Sigma$ is postcomposed with $f$. This however gives us, by the independence of the ribbon graph model observed in the proof of \cref{thmmain}, the \emph{same} pivotal module structure. Therefore, $f_*$ is also compatible with the pivotal module structure. 
	\end{proof}

\section{Applications}

In this last section,  we explain some applications of our main result: We obtain symmetric Frobenius algebras from factorization homology and we use them to construct consistent systems of correlators for open conformal field theories. Moreover, we construct modular invariant modified traces.
All of these applications have already been motivated in the introduction, so we allow ourselves to keep this section short and just state the results.

\subsection{Frobenius structure on moduli algebras}
If $\mathcal{A}$ is a pivotal finite tensor category and $F=(F, \mu, \eta)$ is an algebra object in $\cat{A}$, a \emph{Frobenius structure} on $F$ is a morphism $\lambda : F \to \mathbbm{1}$ in $\cat{C}$, called the \emph{Frobenius form}, such that the composite 
$$ \psi : F \ra{\id_F \otimes b_F \ }  F \otimes F \otimes F^\vee \ra{\lambda\circ \mu \otimes \id_{F^\vee}}  F^\vee
	 $$
is an isomorphism. We say additionally that the Frobenius algebra $(F, \lambda)$ is \emph{symmetric} if the composite 
$F \ra{  \omega_F    } F^{\vee \vee}  \ra{   \psi^\vee    }  F^\vee$ agrees with $\psi$, where $\omega: \id \cong (-)^{\vee \vee}$ denotes the pivotal structure of $\cat{A}$. We refer the reader to \cite{fuchsstigner} for more background.

																																							As a consequence of our main result, we will now give a symmetric Frobenius structure on the moduli algebra
$\mathfrak{a}_\Sigma:=\END(\mathcal{O}_\Sigma^\cA) \in \cA^{\boxtimes n}$ for a unimodular finite ribbon category $\cA$, with the assumptions on $\Sigma$ being the ones from \cref{thmmain}.	By 	\cite[Section 5.4]{bzbj},	 	the moduli algebra 	$\mathfrak{a}_\Sigma$	 carries a mapping class group action which is induced by the homotopy fixed point structure of $\mathcal{O}_\Sigma^\cA$ with respect to the canonical diffeomorphism group action on factorization homology.																											

To further illustrate the significance of this algebra, let $n=1$ and $\Sigma$ be connected.
Then, as an object of $\cA$, we have $\mathfrak{a}_\Sigma \cong \mathbb{F}^{\otimes 2g+r-1}$ where $\mathbb{F} =
\int^{X \in \cA} X^\vee  \otimes X  $ if $\Sigma$ has genus $g$ and $r$ boundary components.   The relevance of this object comes from \cite[Theorem 5.14]{bzbj}, where it is shown that there is an equivalence of left $\cA$-module categories
\begin{equation}\label{eq:FH=mod_MA}
	\int_\Sigma \cA \simeq \mathsf{mod}_\cA\text{-}\mathfrak{a}_\Sigma .
\end{equation}

\begin{corollary}\label{cor:a_Sigma_sym_Frob}
	Let $\cat{A}$ be a unimodular finite ribbon category with chosen trivialization $\alpha \cong \mathbbm{1}$ of the distinguished invertible object,
	and let $\Sigma$ be
	a compact, oriented surface  with $n$ marked boundary intervals, at least one per connected component.
Then the moduli algebra $\mathfrak{a}_\Sigma\in\cA^{\boxtimes n}$ comes canonically 
with the structure of a symmetric Frobenius algebra, and the mapping class group action on $\mathfrak{a}_\Sigma$ 
preserves the Frobenius structure.
\end{corollary}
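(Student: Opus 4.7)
The plan is to deduce this result as a direct consequence of \cref{thmmain} combined with the Schaumann--Shimizu theorem \cite[Theorem~3.15]{relserre} that was already cited in the introduction: internal endomorphism algebras of pivotal module categories over pivotal finite tensor categories are canonically symmetric Frobenius algebras. By \cref{thmmain}, the factorization homology $\int_\Sigma \cA$ is a pivotal left $\cA^{\boxtimes n}$-module category, and by construction $\mathfrak{a}_\Sigma = \END(\cat{O}_\Sigma^\cA)$ is exactly the internal endomorphism algebra of the quantum structure sheaf in this pivotal module category. So the first half of the corollary is essentially a one-line invocation: the pivotal module structure on $\int_\Sigma \cA$ produces an isomorphism $\mathfrak{a}_\Sigma^\vee \cong \mathfrak{a}_\Sigma$ (through the trivialization of the right relative Serre functor $\sS^r$ as a module endofunctor, using the natural isomorphism $\HOM(M,\sS^r(N))\cong \HOM(N,M)^\vee$), and the Schaumann--Shimizu machinery packages this isomorphism into the required symmetric Frobenius form $\lambda:\mathfrak{a}_\Sigma\to \mathbbm{1}$.

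For the second statement, the key input is \cref{corinvariance}, which asserts that for every orientation- and boundary-parametrization preserving diffeomorphism $f:\Sigma\to \Sigma$ the induced module autoequivalence $f_*:\int_\Sigma \cA \xrightarrow{\simeq} \int_\Sigma \cA$ is \emph{compatible with the pivotal module structure}. The mapping class group action on $\mathfrak{a}_\Sigma$ is obtained by transporting endomorphisms of $\cat{O}_\Sigma^\cA$ along $f_*$ and subsequently using the homotopy fixed point structure $f_*(\cat{O}_\Sigma^\cA)\cong \cat{O}_\Sigma^\cA$ coming from the canonical pointing of factorization homology. Because the Schaumann--Shimizu Frobenius form is constructed purely from the internal hom, the rigid duality of $\cA^{\boxtimes n}$, and the trivialization of $\sS^r$, and because all three pieces of data are intertwined (up to coherent isomorphism) by $f_*$, functoriality of the construction immediately yields that the induced algebra automorphism of $\mathfrak{a}_\Sigma$ preserves $\lambda$ on the nose.

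The main obstacle I anticipate is purely bookkeeping: one has to verify that the identification $f_*(\mathfrak{a}_\Sigma)\cong \mathfrak{a}_\Sigma$ assembled from (i) the canonical isomorphism $\END(f_*(\cat{O}_\Sigma^\cA))\cong f_*\END(\cat{O}_\Sigma^\cA)$ coming from $f_*$ being a pivotal module equivalence, and (ii) the homotopy-fixed-point isomorphism $f_*(\cat{O}_\Sigma^\cA)\cong \cat{O}_\Sigma^\cA$, is compatible with the Frobenius form. Both pieces are natural, so this reduces to checking that the Frobenius form is functorial under pivotal module equivalences (for (i)) and under isomorphisms of the underlying object (for (ii)); both are standard consequences of the construction of the Frobenius structure as reviewed in \cite{relserre}. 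No further nontrivial computation should be required, so the proof collapses to a very short paragraph applying \cref{thmmain,corinvariance} together with \cite[Theorem~3.15]{relserre}.
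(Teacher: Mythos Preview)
Your proposal is correct and follows precisely the same approach as the paper: invoke \cref{thmmain} together with \cite[Theorem~3.15]{relserre} for the symmetric Frobenius structure, and \cref{corinvariance} for the mapping class group invariance. The paper's proof is in fact the two-line version you anticipate, without spelling out the bookkeeping details you describe.
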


\begin{proof}
The algebra 	$\mathfrak{a}_\Sigma := \END(\cat{O}_\Sigma^{\cA})$
inherits a symmetric Frobenius structure
from the pivotal $\cA^{\boxtimes n}$-structure 
from \cref{thmmain} thanks to \cite[Theorem 3.15]{relserre}.
The mapping class group action preserves the Frobenius structure by \cref{corinvariance}.
\end{proof}

\cref{thmmain} also allows us to make a statement about the center of moduli algebras, for the notion of center suggested in \cite{internal}:

\begin{corollary}
	Let $\cat{A}$ be a unimodular finite ribbon category with chosen trivialization $\alpha \cong \mathbbm{1}$ of the distinguished invertible object,
	and let $\Sigma$ be
	a compact, oriented, connected surface  with one marked boundary interval.
	Then the center
	\begin{align}
		Z(\mathfrak{a}_\Sigma) = \int_{X \in \mathsf{mod}_\cA\text{-}\mathfrak{a}_\Sigma} \END(X) \in Z(\cA)
	\end{align}
	is a symmetric commutative Frobenius algebra in the Drinfeld center $Z(\cA)$.
\end{corollary}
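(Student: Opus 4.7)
The plan is to deduce this statement essentially as a direct combination of \cref{thmmain}, the description of $\int_\Sigma\cA$ as module category over $\mathfrak{a}_\Sigma$ in \eqref{eq:FH=mod_MA}, and the general algebraic machinery of internal centers of pivotal module categories developed in \cite{internal}.

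First I would observe that because $\Sigma$ is connected and has exactly one marked boundary interval, the hypotheses of \cref{thmmain} are met with $n=1$, so the factorization homology $\int_\Sigma \cA$ carries the structure of a pivotal left $\cA$-module category. The equivalence
\begin{equation*}
\int_\Sigma \cA \simeq \mathsf{mod}_\cA\text{-}\mathfrak{a}_\Sigma
\end{equation*}
from \eqref{eq:FH=mod_MA} is one of left $\cA$-module categories. Since a pivotal module structure is by definition a trivialization of the right relative Serre functor as $\cA$-module functor, it is a Morita-invariant datum and transports along any equivalence of $\cA$-module categories. In particular, $\mathsf{mod}_\cA\text{-}\mathfrak{a}_\Sigma$ inherits a pivotal left $\cA$-module structure from the one on $\int_\Sigma \cA$ provided by \cref{thmmain}.

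Next I would invoke the general algebraic result from \cite{internal}: for any pivotal left $\cA$-module category $\cat{M}$ over a pivotal finite tensor category $\cA$, the end
\begin{equation*}
Z(\cat{M}) \;=\; \int_{X \in \cat{M}} \END(X)
\end{equation*}
exists in $\cA$, lifts canonically to the Drinfeld center $Z(\cA)$, and carries there the structure of a commutative algebra; moreover, the pivotal module structure endows $Z(\cat{M})$ with a symmetric Frobenius structure in $Z(\cA)$. Applying this to $\cat{M} = \mathsf{mod}_\cA\text{-}\mathfrak{a}_\Sigma$ with its inherited pivotal structure yields the claim, because by the very definition stated in the corollary, $Z(\mathfrak{a}_\Sigma)$ is precisely $Z(\mathsf{mod}_\cA\text{-}\mathfrak{a}_\Sigma)$.

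The only piece of substance beyond quoting the two results is the verification that the pivotal structure on $\mathsf{mod}_\cA\text{-}\mathfrak{a}_\Sigma$ transported from $\int_\Sigma \cA$ coincides with the pivotal structure under which \cite{internal} extracts the symmetric Frobenius structure on the center. This compatibility will be the main point to spell out; however, since both pivotal structures are determined by trivializing the right relative Serre functor of the same $\cA$-module category, and the transport along an $\cA$-module equivalence is canonical, this reduces to naturality of the constructions in \cite{internal} under equivalences of pivotal module categories, which is built into that framework.
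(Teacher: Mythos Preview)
Your proposal is correct and follows essentially the same route as the paper: apply \cref{thmmain} with $n=1$, transport the pivotal $\cA$-module structure along the $\cA$-module equivalence \eqref{eq:FH=mod_MA}, and then invoke \cite[Corollary~39]{internal}. Your final paragraph is superfluous: the result from \cite{internal} takes as input an arbitrary pivotal module category and produces a symmetric commutative Frobenius algebra in $Z(\cA)$, so there is no second pivotal structure to reconcile with; you simply feed in the one obtained by transport.
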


\begin{proof}
	This follows at once from \cref{thmmain}, the equivalence \eqref{eq:FH=mod_MA} and \cite[Corollary~39]{internal}.
\end{proof}

\subsection{Open correlators}
As explained in the introduction, \cref{cor:a_Sigma_sym_Frob} has the following implication in terms of open correlators:

\begin{corollary}\label{coropen}
	\end{corollary}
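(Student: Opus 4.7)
The plan is to invoke the classification theorem from \cite{microcosm} recalled in the introduction: consistent systems of open correlators for $\Ao$ are in bijection with symmetric Frobenius algebras in $\cA$. The requisite symmetric Frobenius algebra is supplied by \cref{cor:a_Sigma_sym_Frob} in the form of the moduli algebra $\mathfrak{a}_\Omega$. Plugging $F=\mathfrak{a}_\Omega$ into the classification immediately produces a family $\xi_\Sigma^\Omega \in \Ao(\Sigma; \mathfrak{a}_\Omega,\dots,\mathfrak{a}_\Omega)$, compatible with gluing of marked boundary intervals and fixed by mapping class groups, thereby defining a full open conformal field theory $(\Ao,\xi^\Omega)$.

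For the explicit form of $\xi_\Sigma^\Omega$ given in \ref{item:i_cor}, I would first identify the space of open conformal blocks with a Hom space in factorization homology. Using the characterization of $\Lexf$-valued open modular functors as pivotal Grothendieck-Verdier categories from \cite[Theorem 2.2]{envas} together with excision, one obtains a natural isomorphism
$$
\Ao(\Sigma; X_1,\ldots,X_n) \cong \Hom_{\int_\Sigma \cA}\bigl(\cat{O}_\Sigma^\cA,\, (X_1\boxtimes\cdots\boxtimes X_n) \ogreaterthan \cat{O}_\Sigma^\cA\bigr) .
$$
Under this identification $\xi_\Sigma^\Omega$ corresponds to a morphism $\cat{O}_\Sigma^\cA \to \mathfrak{a}_\Omega^{\boxtimes n}\ogreaterthan \cat{O}_\Sigma^\cA$. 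The modular invariance that is part of the definition of a consistent system of correlators, combined with the canonical $\Map(\Sigma)$-equivariance of the identification (both sides carry an action induced by the diffeomorphism action on factorization homology, for which $\cat{O}_\Sigma^\cA$ is a canonical homotopy fixed point), yields the invariance asserted in \ref{item:i_cor}.

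The equivalence of \ref{item:i_cor} and \ref{item:ii_cor} I would derive by a chain of natural isomorphisms. First, the rigid duality adjunction in $\cA^{\boxtimes n}$ gives
$$ \Hom_{\int_\Sigma \cA}(\cat{O}_\Sigma^\cA,\, Y \ogreaterthan \cat{O}_\Sigma^\cA) \cong \Hom_{\int_\Sigma \cA}(Y^\vee \ogreaterthan \cat{O}_\Sigma^\cA,\, \cat{O}_\Sigma^\cA) $$
for any $Y \in \cA^{\boxtimes n}$. Second, the symmetric Frobenius structure on $\mathfrak{a}_\Omega$ from \cref{cor:a_Sigma_sym_Frob} yields a self-duality $\mathfrak{a}_\Omega^\vee \cong \mathfrak{a}_\Omega$, so $Y := \mathfrak{a}_\Omega^{\boxtimes n}$ is self-dual. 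Third, the internal hom adjunction \eqref{eq:internal_hom_adj} combined with $\mathfrak{a}_\Sigma = \END(\cat{O}_\Sigma^\cA)$ provides
$$ \Hom_{\int_\Sigma \cA}(X \ogreaterthan \cat{O}_\Sigma^\cA,\, \cat{O}_\Sigma^\cA) \cong \Hom_{\cA^{\boxtimes n}}(X, \mathfrak{a}_\Sigma) . $$
Transporting the morphism from \ref{item:i_cor} through these three isomorphisms produces the map $\mathfrak{a}_\Omega^{\boxtimes n}\to \mathfrak{a}_\Sigma$ in \ref{item:ii_cor}, and the $\Map(\Sigma)$-invariance passes through because the Alekseev-Grosse-Schomerus/BZBJ action on $\mathfrak{a}_\Sigma$ is by construction induced from the homotopy fixed point structure of $\cat{O}_\Sigma^\cA$.

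The step I expect to be the main obstacle is the first one: matching the space of open conformal blocks with a Hom in $\int_\Sigma \cA$ in a way that respects sewing and the mapping class group action, since this requires combining the classification of \cite{envas} with explicit excision computations and the homotopy fixed point structure of $\cat{O}_\Sigma^\cA$. Once this bridge is in place, the remainder of the argument is a formal sequence of adjunction manipulations powered by the Frobenius self-duality of $\mathfrak{a}_\Omega$.
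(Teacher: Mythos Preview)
Your proposal is correct and follows essentially the same route as the paper: the main statement comes from \cref{cor:a_Sigma_sym_Frob} combined with the classification of open correlators in \cite[Theorem~8.3]{microcosm}, and \ref{item:ii_cor} is deduced from \ref{item:i_cor} via the adjunction~\eqref{eq:adjunctions_action_functor} and the Frobenius self-duality of $\mathfrak{a}_\Omega$, exactly as you describe. The only difference is in \ref{item:i_cor}: the identification of $\Ao(\Sigma;-)$ with a Hom space in $\int_\Sigma \cA$ that you correctly flag as the main obstacle is not worked out from scratch in the paper but is instead obtained by citing \cite[Theorem~6.1]{correlators}, which packages precisely this bridge (compatibly with sewing and the mapping class group action).
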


\begin{proof}
	The main statement follows from \cref{cor:a_Sigma_sym_Frob}
	because of the classification of open correlators in \cite[Thereom~8.3]{microcosm}.
	Now the first point follows once we combine it with \cite[Theorem 6.1]{correlators}, and the second one is an immediate consequence of \ref{item:i_cor} using \eqref{eq:adjunctions_action_functor} and the self-duality of $\mathfrak{a}_\Omega$ provided by its Frobenius algebra structure. 
	\end{proof}

\begin{example}\label{excalculation}
In \cref{coropen}, let us assume that $\cA$ is given by the finite-dimensional modules over a finite-dimensional ribbon Hopf algebra $H$.
Then the coevaluation $k\to \mathfrak{a}_\Omega^* \otimes \mathfrak{a}_\Omega$ of $\mathfrak{a}_\Omega$ sends $1\in k$ 
to $\sum_i \alpha_i \otimes v_i$ with $\alpha_i\in \mathfrak{a}_\Omega^*$ and $v_i \in \mathfrak{a}_\Omega$. 
We concentrate now on the case in which $\Sigma$ is an annulus with one marked boundary interval.
In that case, $\mathfrak{a}_\Sigma$ is the 
dual of $H$ with the coadjoint action:
$\mathfrak{a}_\Sigma=H_\text{coadj}^*$ \cite{bzbj}, see also the comments before~\eqref{eq:FH=mod_MA}. 
If we denote the product on $\mathfrak{a}_\Omega$ (originating from the composition of internal endomorphisms) by $\circ$, then with the calculation recipe from \cite[Example 9.1]{microcosm}, we deduce that the open correlator for the annulus amounts to the map
$$
	\mathfrak{a}_\Omega \to H_\text{coadj}^* \ , \quad a \mapsto \sum_i \alpha_i ( -   \cdot (  v_i \circ a   )      ) \ , 
$$
	where $\cdot$ is the multiplication in $H$.
	In order to get the map for $\Sigma = \mathbb{T}^2_1$, the torus with one boundary component and one marked interval,
	one essentially needs to repeat this procedure for two copies of $H_\text{coadj}^*$. One then recovers the elliptic class function of the symmetric Frobenius algebra $\mathfrak{a}_\Omega$, see \cite[Section~7.6]{correlators} for details.
		\end{example}

\begin{remark}\label{remsymmetry}
	From \cite[Thereom~8.3 and Section~12]{microcosm}, it follows that the mapping class group of $\Omega$ acts by automorphisms of $(\Ao,\xi^\Omega)$. 
	\end{remark}	

\begin{remark} Denote by $\mathfrak{F}_{Z(\cA)}$ the Lyubashenko modular functor
	\cite{lyu,lyubacmp, lyulex} for the Drinfeld center $Z(\cat{A})$ of $\cat{A}$.
	Then \cite[Theorem 4.3]{envas} implies that the open correlators from \cref{coropen} amount to
mapping class group invariant vectors in $\mathfrak{F}_{Z(\cA)}(\Sigma;  L\mathfrak{a}_\Omega,\dots,L\mathfrak{a}_\Omega      )$, where $L:\cA\to Z(\cA)$ is the left adjoint to the forgetful functor $U:Z(\cA)\to\cA$. 
Thanks to the string-net model for $\mathfrak{F}_{Z(\cA)}$ \cite{sn}, these vectors 
can also be seen as mapping class group invariant string-nets.
\end{remark}

\subsection{Modular invariant modified traces\label{secmodtrace}}
By means of the modular invariance of the pivotal structure featuring in our main result (\cref{corinvariance}),
we may now construct modular invariant traces that can be seen as an integration of modified traces over surfaces (as motivated in the introduction):

\begin{theorem}[Modular invariant modified traces]\label{thmmodtrace}
	
	\end{theorem}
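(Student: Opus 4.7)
The plan is to define $\tr^{\Sigma}$ directly from the pivotal $\cA^{\boxtimes n}$-module structure on $\int_\Sigma \cA$ produced by \cref{thmmain}. Via \cref{lemmapivstr}, this structure is equivalent to a trivialization of the right Nakayama functor $\N^r$ of $\int_\Sigma \cA$ as a $\cA^{\boxtimes n}$-module functor. By the general mechanism recalled in \cref{sectraces}, any such trivialization (already when regarded as a plain endofunctor trivialization) canonically produces a non-degenerate family of cyclic traces on $\Proj \int_\Sigma \cA$ via the twisted Calabi--Yau isomorphism \eqref{eq:Nr-twisted_CY}, and hence a map $\tr^{\Sigma}\colon H\!H_0(\int_\Sigma \cA)\to k$; non-degeneracy (N) is then immediate.

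Properties (M) and (P) are essentially formal consequences of this setup. For (P), we invoke the Shibata--Shimizu characterization recalled in \cref{sectraces} and \cite[Section~5.1]{shibatashimizu}: a trace arises from a trivialization of $\N^r$ \emph{as a module functor} if and only if it satisfies the partial trace property with respect to the closing operator $\close_{X|P}$. Since our trivialization is by construction a $\cA^{\boxtimes n}$-module functor trivialization, (P) follows. For (M), \cref{corinvariance} asserts that every $f\in\Map(\Sigma)$ acts on $\int_\Sigma \cA$ by a module automorphism preserving the pivotal structure, hence preserving the underlying trivialization of $\N^r$; therefore it preserves $\tr^{\Sigma}$ on $H\!H_0$. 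In particular, modular invariance is obtained without ever having to understand generators of the mapping class group explicitly.

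The central step, and the one carrying the most content, is (L). The pivotal structure on $\int_{\Sigma'}\cA$ is built in the proof of \cref{thmmain} by iterated application of \cref{propgluing2} to a ribbon graph model, so it suffices to verify one local statement: under the excision equivalence $\cA\boxtimes_{\cA^\env}\int_{\Sigma}\cA\simeq \int_{\Sigma'}\cA$, the trace on the left hand side matches the trace on the right hand side. But the trace on $\cA\boxtimes_{\cA^\env}\int_\Sigma\cA$ is the one obtained from the trace on $\int_\Sigma\cA$ by the explicit formula spelled out at the end of \cref{sectraces}, namely the composition of the internal realization $\widetilde f$ of an endomorphism with the Frobenius counit $\varepsilon\colon\Delta\to\mathbbm{1}\boxtimes\mathbbm{1}$; by definition this is the trace associated to the trivialization of $\N^r$ propagated through \cref{propgluing2}. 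Hence both traces correspond to the \emph{same} trivialization of $\N^r$ on $\int_{\Sigma'}\cA$, and (L) is tautological. The hardest conceptual point is to trust that packaging everything into the datum of a module-functor trivialization of $\N^r$ is powerful enough: the ribbon-graph-independence hidden in \cref{thmmain} is then what delivers (M) essentially for free.

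Finally, the restriction to a disk $D^2$ recovers the pivotal module structure of \cref{lem:disk_FH} induced by the fixed $\alpha\cong\mathbbm{1}$, whose associated trace is precisely the two-sided modified trace on $\Proj \cA$~\cite{mtrace}. For the converse uniqueness statement, any $\int_\Sigma \cA$ is obtained by excision from copies of $\cA$ via a finite sequence of balanced Deligne products $\boxtimes_{\cA^\env}$, and \cref{propgluing2} together with the explicit formula of \cref{sectraces} uniquely determines the trace at each step from the trace at the previous stage once (L) is imposed; thus a modified trace on $\Proj \cA$ extends uniquely to a family $\{\tr^{\Sigma}\}$ satisfying (N) and (L), and the validity of (M) and (P) then follows automatically by the arguments above.
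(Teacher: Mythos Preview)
Your proposal is correct and follows essentially the same approach as the paper: define $\tr^{\Sigma}$ from the trivialization of $\N^r$ coming from \cref{thmmain}, deduce (N) from the twisted Calabi--Yau property, (P) from the Shibata--Shimizu characterization, (M) from \cref{corinvariance}, and (L) from the very way the pivotal structure was glued via \cref{propgluing2}; the disk case and the uniqueness statement are handled exactly as in the paper via \cite{shibatashimizu,tracesw}. Your write-up is simply more expansive than the paper's terse argument, but there is no substantive difference in strategy.
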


\begin{proof}
	The trivialization of the Nakayama functor of $\int_\Sigma \cA$ that gave us the pivotal $\cA^{\boxtimes n}$-module structure leads to the traces \eqref{eqntracefunctions}.
	The property (N) is clear, and (P) is simply a consequence of the fact 
	that this trivialization is a trivialization as module functor, invoking again \cite[Section~5.1]{shibatashimizu}.
	Property~(M) is a consequence of \cref{corinvariance}, and (L) holds by construction (because we glue the pivotal structures via \cref{propgluing2}).

	The restriction to disks is exactly a trivialization of $\naka$ as module functor relative to the pivotal structure, which by \cite{shibatashimizu,tracesw} is exactly a two-sided modified trace on $\Proj \cA$. Such a trivialization extends from disks to surfaces in a unique way respecting (N) and (L).
	\end{proof}

	\begin{remark}
		If $\cA$ is not only a unimodular finite ribbon category, but actually a modular category, then
		the mapping class group acts on $\int_\Sigma \cA$ by functors that, as $\cA^{\boxtimes n}$-module maps, are all isomorphic to the identity functor~\cite[Proposition 7.7]{brochierwoike} (but not canonically so --- in fact, the action amounts to a 2-cocycle on the mapping class group encoding the framing anomaly). In that situation, (M) holds for any cyclic trace, but if $\cA$ is not modular the mapping class group action on factorization homology is not trivial: In fact, if it were, $\cA$, as a cyclic framed $E_2$-algebra, would be connected and hence extends to a modular functor by the main result of \cite{brochierwoike}. But then \cite[Proposition 5.8]{reflection} would imply that $\cA$ is actually modular (because we have assumed that $\cA$ is finite ribbon and unimodular).
		\end{remark}

	\small	
	\bibliographystyle{alpha}
	\bibliography{ref}
	\vspace*{0.3cm} \noindent  \textsc{Université Bourgogne Europe, CNRS, IMB UMR 5584, F-21000 Dijon, France}
	
\end{document}